\documentclass[a4paper]{amsart}
\usepackage{times}
\usepackage{cite}
\usepackage{amsthm,amsmath,amssymb,amsfonts}
\usepackage{algorithmic}
\usepackage{graphicx}
\usepackage{textcomp}
\usepackage{mathtools, dsfont}

\usepackage{ stmaryrd }

\usepackage{bussproofs}

\usepackage{ mathrsfs }

\usepackage{relsize}

\usepackage{amssymb}
\usepackage{graphicx}

\usepackage{amsmath}
\usepackage{placeins}
\usepackage{array}

\newcommand{\vsim}{\mathrel{\scalebox{1}[1.5]{$\shortmid$}\mkern-3.1mu\raisebox{0.15ex}{$\sim$}}}

\usepackage{hyperref}
\usepackage{bussproofs}

\usepackage{tikz}
\usetikzlibrary{arrows,chains,matrix,positioning,scopes}
\def\BibTeX{{\rm B\kern-.05em{\sc i\kern-.025em b}\kern-.08em
    T\kern-.1667em\lower.7ex\hbox{E}\kern-.125emX}}

\makeatletter
\tikzset{join/.code=\tikzset{after node path={%
\ifx\tikzchainprevious\pgfutil@empty\else(\tikzchainprevious)%
edge[every join]#1(\tikzchaincurrent)\fi}}}
\makeatother

\tikzset{>=stealth',every on chain/.append style={join},
         every join/.style={->}}
\tikzstyle{labeled}=[execute at begin node=$\scriptstyle,
   execute at end node=$]

\newtheorem{Def}{Definition}
\newtheorem{Thm}{Theorem}
\newtheorem{Rmk}[Thm]{Remark}
\newtheorem{Exm}[Thm]{Example}
\newtheorem{Cor}[Thm]{Corollary}

\newtheorem{Pro}[Thm]{Proposition}

\usepackage{color,dsfont}
\theoremstyle{plain}

\theoremstyle{definition}
\usepackage[]{graphicx}


\title{ First-order friendliness}

 \author{Guillermo Badia }
 \author{David Makinson}
\address{University of Queensland, Brisbane, Australia}

 \email{g.badia@uq.edu.au, d.makinson@uq.edu.au}

 \pagestyle{empty}

 \usepackage{fancyhdr}

\cfoot{{\footnotesize \thepage} \vspace{1pt}}

\begin{document}


\begin{abstract}
In this note we study a  counterpart in predicate logic of the notion of \emph{logical friendliness}, introduced into propositional logic in Makinson (2007). The result is a new inference relation for predicate languages using first-order models. Although  compactness and interpolation fail dramatically, several other properties are  preserved from the propositional case.

\medskip

\noindent{\bf Keywords:}  friendliness, inference relation, compactness,  interpolation, axiomatizability. \medskip

\noindent{\bf 2020 Mathematics Subject Classification:} 03B10, 03B99.
 \end{abstract}

\dedicatory{This article is dedicated to our friend John N. Crossley on the occasion of his 85th birthday.}

\maketitle

\section{Introduction and definition}

The relation of logical friendliness, introduced in the propositional context in \cite{Makinson}, has a very
straightforward definition as a $\forall \exists$ version of the fundamental $\forall \forall$ notion of consequence.
Specifically, if $\Gamma$ is a set of formulae of classical propositional logic and $\phi$ is a formula of the same, $\Gamma$ is said to be friendly to $\phi$ iff for every valuation $v$ on the propositional variables occurring in
formulae of $\Gamma$, if $v(\gamma)=1$ for all $\gamma \in \Gamma$ then there is an extension of $v$ to a valuation $v'$ covering
also any remaining variables in $\phi$ such that $v(\phi) = 1$. It is thus a weakening of classical
consequence and if the existential quantifier in its definition is replaced by a universal one, it
reverts to the classical relation.

So defined, friendliness has a number of interesting features. While lacking some familiar
properties of classical consequence, it satisfies some others in full, as well as yielding `local'
versions of yet others, as shown in \cite{Makinson}. However, if we seek to extend the definition from the
propositional to the first-order context, a number of options arise due to the greater complexity
of the notion of a first-order model, with its ingredients of domain of discourse, values for
individual constants, values for predicate and function letters, and the equality relation. The various options generate distinct relations, which differ in their behaviour. {Indeed, two of the lessons of the present paper are that the concept of friendliness is less robust in the first-order context than in the propositional one and that even the seemingly best behaved of the possible first-order options is less regular than its propositional counterpart, notable with respect to compactness and interpolation.}

 Definition \ref{friend} of the present
paper chooses one of those options for close study, selected because it
manifests the greatest regularity and continuity with the propositional case. In the main text,
alternative definitions are mentioned only in passing where pointed comment is particularly
helpful, but an Appendix systematically compares them with the nominated one and each other. 

{
Attention to differences between options is important if one is to study the connections between
friendliness and cognate notions that are already familiar in logic and the philosophy of science. In
\cite{Makinson}, links between friendliness in the propositional context and concepts such the 'development'
of a propositional formula \cite{Boole} and `forgetting' a propositional letter \cite{Weber} were
identified and studied. But links to the criterion of conservativity in the theory of first-order
definition \cite{Les}, as well as to the Ramsey eliminability of terms as studied in the
philosophy of science by \cite{Van Benthem}, \cite{Rantala} and others, were merely sketched
due to the lack of a clear view of the landscape of options for friendliness in the first-order context.
The present text provides that view, on the basis of which such connections may be investigated
more thoroughly in the future.}

The following familiar notions and notations of first-order logic are background to the paper.
Let $L$ be a predicate \emph{language} (in the sense of a vocabulary of non-logical symbols) with equality (taken as a constant predicate having the fixed interpretation of `true identity' in all models), function, relation, and constant symbols. As usual in the presence of equality, we can treat function and individual constant symbols as additional predicate letters, so we may assume the language is purely relational for simplicity.  If $\phi$ is a formula of  $L$, we will refer to the sublanguage of $L$ restricted to the vocabulary of $\phi$ as $L_\phi$ (similarly, if $\Gamma \cup\{\phi\}$ is a set of formulas, we write $L_{\Gamma, \phi}$).{ If $\mathfrak{A}$ and $\mathfrak{B}$ are structures for any language  $L' $ s.t. $L_\phi \subseteq L' \subseteq L$, we  write $\mathfrak{A}\preceq_{L_\phi}\mathfrak{B}$} to mean that there is an embedding $f: A\longrightarrow
 B$ such that for any formula $\psi$ of the language $L_\phi$, $\mathfrak{A}\models \psi[\overline{a}]$ 
 iff $\mathfrak{B}\models \psi[f(\overline{a})]$ (this is an \emph{elementary} embedding). { In what follows we will use the symbol $\vdash$ to denote the usual logical consequence relation in first-order logic.}
 
Given a structure $\mathfrak{A}$ for a language $L$,  an \emph{expansion} of $\mathfrak{A}$ to a language $L' \supseteq L$ is any  model  $\mathfrak{A}'$ for  $L'$ with the same domain as  $\mathfrak{A} $, interpreting the symbols of $L$ as in $\mathfrak{A}$,   and specifying interpretations for the new predicate symbols in $L'$. We are now ready to state  our first-order counterpart of the main definition from \cite{Makinson}:

\begin{Def} \label{friend}\emph{Let $\Gamma$ be a set of sentences and $\phi$ a sentence from $L$. We say that $\Gamma$ is \emph{friendly to} $\phi$ (in symbols, $\Gamma \vsim \phi$) iff for every model $\mathfrak{A} \models \Gamma$ for the language $L_\Gamma$,  there is a model  $\mathfrak{A}'$ for the same language such that $\mathfrak{A}\preceq_{L_\Gamma}
 \mathfrak{A}'$ and, furthermore,  $\mathfrak{A}'$ can be expanded  to a model $\mathfrak{A}''$ for the language $L_{\Gamma, \phi}$ for which $\mathfrak{A}''\models \phi$}. 

\end{Def}

\noindent In the Appendix, we  show that in Definition \ref{friend} above we could replace the relation of elementary embedding by that of elementary equivalence, thus using a relation between the models  $\mathfrak{A}$ and $\mathfrak{A}'$ that is symmetric.  A simple example taken from \cite{Kossak}, where it is used for a different purpose,  shows that  $\vsim $ is not a trivial relation:

\begin{Exm}\emph{
Suppose $(G, +)$ is a six-element group. Then $\text{Th}(G, +)$, the complete first-order theory of the structure $(G, +)$, is not friendly to the sentence $\phi_{\text{Field}}$, the conjunction of the axioms for fields.  This is because every elementary extension of $(G, +)$ still has exactly six elements, and all finite fields are of order $p^n$ for some $n\geq1$ and $p$  prime.  Similarly, considering now infinite structures, the complete first-order theory of $(\mathds{Z}, +)$ is not friendly to $\phi_{\text{Field}}$ as we have that $\phi_{\text{Field}} \vdash (1+1=0) \vee \exists x (x+x=1)$  but $(1+1=0) \vee \exists x (x+x=1)$ is  false in the structure $(\mathds{Z}, +)$.}
\end{Exm}

\section{Continuities with the propositional case}

All results of this section extend to the first-order context properties established in \cite{Makinson} for the propositional one. Properties that do not  extend to the first-order context are considered in Section \ref{dis}.

The first proposition (supraclassicality) is trivial but worth stating as it is repeatedly applied:

\begin{Pro}
[Supraclassicality]\label{supra} $\Gamma \vdash \phi$ only if $\Gamma \vsim \phi$.
\end{Pro}

\begin{proof} {Suppose that $\Gamma \vdash \phi$.  Consider a model $\mathfrak{A} \models \Gamma$ for the language $L_\Gamma$. Then any model  $\mathfrak{A}'$ for the same language such that $\mathfrak{A}\preceq_{L_\Gamma}
 \mathfrak{A}'$ is a model of $\Gamma$ and, furthermore, in whatever way one expands $\mathfrak{A}'$  to a model $\mathfrak{A}''$ for the language $L_{\Gamma, \phi}$ we 
 must have  $\mathfrak{A}''\models \phi$ since $\Gamma \vdash \phi$.}
\end{proof}

Next we identify two contexts in which $\vsim$ reduces to $\vdash$:

\begin{Pro}
[First Reduction Case] \label{red1}  If $L_\phi \subseteq L_\Gamma$, then   $\Gamma \vsim \phi$ iff  $\Gamma  \vdash  \phi$.
\end{Pro}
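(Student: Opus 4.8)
The plan is to handle the two directions separately. The implication $\Gamma \vdash \phi \Rightarrow \Gamma \vsim \phi$ needs no hypothesis on the vocabularies at all: it is precisely Proposition~\ref{supra} (supraclassicality), so there is nothing further to do in that direction.

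For the converse I would argue by contraposition, moving freely between $\vdash$ and $\models$ via the completeness theorem. So assume $\Gamma \not\models \phi$ and fix a model $\mathfrak{A}$ for $L_\Gamma$ with $\mathfrak{A} \models \Gamma$ and $\mathfrak{A} \not\models \phi$; the goal is to show that $\mathfrak{A}$ witnesses $\Gamma \not\vsim \phi$. The first observation is purely about vocabularies: since the non-logical symbols of $\phi$ all occur in $\Gamma$, the language $L_{\Gamma,\phi}$ coincides with $L_\Gamma$. Consequently the ``expansion'' clause in Definition~\ref{friend} is vacuous — any structure $\mathfrak{A}'$ for $L_\Gamma$ with $\mathfrak{A} \prec_{L_\Gamma} \mathfrak{A}'$ is already its own (and only) expansion $\mathfrak{A}''$ to $L_{\Gamma,\phi}$ — so to refute $\Gamma \vsim \phi$ it suffices to show that \emph{no} such $\mathfrak{A}'$ satisfies $\phi$.

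This is where the elementary embedding does its work. Because $\phi$ is a sentence whose vocabulary lies in $L_\Gamma$, it is in particular a sentence of $L_\Gamma$, so the defining biconditional of $\prec_{L_\Gamma}$, applied to $\phi$ (which has no free variables), yields $\mathfrak{A}' \models \phi$ iff $\mathfrak{A} \models \phi$. Since $\mathfrak{A} \not\models \phi$, every admissible $\mathfrak{A}'$ fails $\phi$, and hence $\mathfrak{A}$ is a model of $\Gamma$ for which the conclusion of Definition~\ref{friend} cannot be met. Thus $\Gamma \not\vsim \phi$, which is the contrapositive of what we wanted.

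I do not expect a genuine obstacle here: the argument is a direct unwinding of Definition~\ref{friend}. The only two points that require care are the vocabulary bookkeeping — noting that $L_\phi \subseteq L_\Gamma$ collapses $L_{\Gamma,\phi}$ to $L_\Gamma$ and so trivialises the expansion step — and the reminder that an elementary embedding preserves the truth of sentences in \emph{both} directions, not merely upward, which is exactly what forces $\mathfrak{A}' \not\models \phi$ from $\mathfrak{A} \not\models \phi$.
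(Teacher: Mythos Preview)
Your proof is correct and follows essentially the same approach as the paper's: both directions rely on supraclassicality for $\vdash\Rightarrow\vsim$, and for the converse both use the observation that $L_\phi\subseteq L_\Gamma$ collapses $L_{\Gamma,\phi}$ to $L_\Gamma$ (trivialising the expansion step) together with the fact that an elementary embedding preserves and reflects the truth of $L_\Gamma$-sentences. The only cosmetic difference is that you argue by contraposition while the paper argues directly.
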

\begin{proof} {Right to left: holds directly by supraclassicality. 
Left to right: Suppose that $\Gamma \vsim \phi$ and  $\mathfrak{A}\models \Gamma$; we need to show that $\mathfrak{A} \models  \phi$. By hypothesis,  there is a model  $\mathfrak{A}'$ for the same language such that $\mathfrak{A}\preceq_{L_\Gamma}
 \mathfrak{A}'$ and, furthermore,  $\mathfrak{A}'$ can be expanded to a model $\mathfrak{A}''$ for the language $L_{\Gamma,\phi}$ for which $\mathfrak{A}''\models \phi$. However, since $L_\phi \subseteq L_\Gamma$,   $\mathfrak{A}''=\mathfrak{A}'$, so  $\mathfrak{A}'\models \phi$ and since $\mathfrak{A}\preceq_{L_\Gamma}
 \mathfrak{A}'$, also $\mathfrak{A} \models \phi$ as desired.}
\end{proof}

In the proof of the next two propositions we will use Robinson diagrams. The reader can consult \cite{Hodges} for an  exposition of the  technique. For any structure $\mathfrak{B}$ for a language $L$,  we will let  $\text{eldiag}(\mathfrak{B})$ denote the \emph{elementary diagram} of the structure $\mathfrak{B}$. Importantly, $\text{eldiag}(\mathfrak{B})$ is a theory in the language resulting from adding to $L$ new constants for all the elements in the domain of $\mathfrak{B}$ and  containing all the first-order sentences true in  the expansion $\mathfrak{B}'$ of  $\mathfrak{B}$ where each constant has been interpreted as  its corresponding element in the domain of $\mathfrak{B}$.   The useful feature of $\text{eldiag}(\mathfrak{B})$ is that whenever a model satisfies it, $\mathfrak{B}$ can be elementarily embedded into its restriction  to language $L$ \cite[Lemma 2.5.3]{Hodges}. 

\begin{Pro}
[Second Reduction Case] \label{red2}Let $\Gamma$ be a  consistent and complete theory in a language $L_\Gamma$. Then for any sentence $\phi$  of $L$, $\Gamma \vsim \phi$ iff $\Gamma \not \vdash \neg \phi$.
\end{Pro}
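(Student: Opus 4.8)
The plan is to prove the two implications separately, and to observe that the left-to-right direction in fact uses only the consistency of $\Gamma$, while completeness is exactly what powers the converse.

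For the direction $\Gamma \vsim \phi \Rightarrow \Gamma \not\vdash \neg\phi$, I would argue by contradiction: assume $\Gamma \vsim \phi$ and $\Gamma \vdash \neg\phi$. Since $\Gamma$ is consistent, fix a model $\mathfrak{A}\models\Gamma$ for $L_\Gamma$. Definition \ref{friend} then yields $\mathfrak{A}'$ with $\mathfrak{A}\prec_{L_\Gamma}\mathfrak{A}'$ together with an expansion $\mathfrak{A}''$ of $\mathfrak{A}'$ to $L_{\Gamma,\phi}$ such that $\mathfrak{A}''\models\phi$. Because elementary embeddings preserve all $L_\Gamma$-sentences and expanding the vocabulary does not disturb $L_\Gamma$-truths, $\mathfrak{A}''\models\Gamma$; hence $\mathfrak{A}''\models\neg\phi$ by the assumed derivability, contradicting $\mathfrak{A}''\models\phi$. (Note that not even Proposition \ref{supra} is needed here.)

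For the converse, assume $\Gamma\not\vdash\neg\phi$ and let $\mathfrak{A}\models\Gamma$ be an arbitrary model for $L_\Gamma$; I must produce the witnesses $\mathfrak{A}',\mathfrak{A}''$ demanded by Definition \ref{friend}. I would pass to the language obtained from $L_{\Gamma,\phi}$ by adding a fresh constant $c_a$ for every element $a$ of the domain of $\mathfrak{A}$, and consider the theory $T := \text{eldiag}(\mathfrak{A})\cup\{\phi\}$ in that language. Provided $T$ is consistent, take a model $\mathfrak{M}\models T$ and set $\mathfrak{A}'' := \mathfrak{M}\restriction L_{\Gamma,\phi}$ and $\mathfrak{A}' := \mathfrak{M}\restriction L_\Gamma$. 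Then $\mathfrak{A}''\models\phi$, the structure $\mathfrak{A}''$ is an expansion of $\mathfrak{A}'$ to $L_{\Gamma,\phi}$, and since $\mathfrak{M}\models\text{eldiag}(\mathfrak{A})$ we obtain $\mathfrak{A}\prec_{L_\Gamma}\mathfrak{A}'$ by \cite[Lemma 2.5.3]{Hodges} — which is precisely what is required, so $\Gamma\vsim\phi$.

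The crux, and the step I expect to be the main obstacle, is the consistency of $T$. Suppose it fails. By compactness, some conjunction $\theta(c_{a_1},\dots,c_{a_n})$ of finitely many members of $\text{eldiag}(\mathfrak{A})$ satisfies $\vdash \theta(c_{a_1},\dots,c_{a_n})\to\neg\phi$. The new constants $c_{a_1},\dots,c_{a_n}$ occur neither in $\phi$ nor in the background logic, so generalizing on them gives $\vdash \forall x_1\cdots\forall x_n(\theta(x_1,\dots,x_n)\to\neg\phi)$, hence $\vdash (\exists x_1\cdots\exists x_n\,\theta(x_1,\dots,x_n))\to\neg\phi$. The antecedent here is an $L_\Gamma$-sentence — all non-logical symbols of $\theta$ besides the eliminated constants come from $L_\Gamma$ — and it is true in $\mathfrak{A}$, with $a_1,\dots,a_n$ as witnesses. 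Now completeness enters decisively: since $\Gamma$ is complete and $\mathfrak{A}\models\Gamma$, every $L_\Gamma$-sentence true in $\mathfrak{A}$ is derivable from $\Gamma$, so $\Gamma\vdash\exists x_1\cdots\exists x_n\,\theta$ and therefore $\Gamma\vdash\neg\phi$, contradicting the hypothesis of this direction. Hence $T$ is consistent and the argument closes.
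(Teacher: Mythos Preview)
Your proof is correct and follows essentially the same approach as the paper: both directions match, and in the converse you run the same compactness-plus-Robinson-diagram argument to show $\text{eldiag}(\mathfrak{A})\cup\{\phi\}$ is consistent. Your use of completeness is slightly more direct---you derive $\Gamma\vdash\exists\overline{x}\,\theta$ straight from $\mathfrak{A}\models\exists\overline{x}\,\theta$, whereas the paper first materializes a model $\mathfrak{B}\models\Gamma\cup\{\phi\}$ and routes the contradiction through the elementary equivalence $\mathfrak{A}\equiv_{L_\Gamma}\mathfrak{B}$---but this is a cosmetic difference in the same argument.
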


\begin{proof} Let $\Gamma \vsim \phi$. Since $\Gamma$ is consistent, we have a model $\mathfrak{A} \models \Gamma$ for the language $L_\Gamma$, but given that $\Gamma \vsim \phi$, there is a model  $\mathfrak{A}'$ for the same language such that $\mathfrak{A}\preceq_{L_\Gamma}
 \mathfrak{A}'$ and, furthermore,  $\mathfrak{A}'$ can be expanded to a model $\mathfrak{A}''$ for the language $L_{\Gamma,\phi}$ for which $\mathfrak{A}''\models \phi$. But if $\Gamma \vdash \neg \phi$, then each of $\mathfrak{A}, \mathfrak{A}', \mathfrak{A}'' \models \neg \phi$. Consequently, $\Gamma \not \vdash \neg \phi$.

  For the converse, assume that $\Gamma \not \vdash \neg \phi$. Then we have a model $\mathfrak{B}$ for the language $L_{\Gamma,\phi}$ such that $\mathfrak{B}\models \Gamma$ and $\mathfrak{B}\not \models \neg \phi$, i.e., $\mathfrak{B} \models \phi$, and  clearly, $(\mathfrak{B}\upharpoonright L_\Gamma)\models \Gamma$. We need to show that $\Gamma \vsim \phi$. Let  $\mathfrak{A}$ be a model for the language $L_\Gamma$
  with $\mathfrak{A}\models \Gamma$. We need to find a model  $\mathfrak{A}'$ for the same language such that $\mathfrak{A}\preceq_{L_\Gamma}
 \mathfrak{A}'$ and an expansion $\mathfrak{A}''$ of $\mathfrak{A}'$ to the language $L_{\Gamma, \phi}$ for which $\mathfrak{A}''\models \phi$.  To find a suitable $\mathfrak{A}'$, first observe that  $\mathfrak{B} \equiv_{L_\Gamma} \mathfrak{A}$ (the structures are elementarily equivalent in the language $L_\Gamma$) since $\Gamma $ is complete. This implies that $\text{Th}_{L_\Gamma}(\mathfrak{A}) \cup \{\phi\} = \text{Th}_{L_\Gamma}(\mathfrak{B}) \cup \{\phi\}$ is consistent. By a routine compactness argument using Robinson diagrams,  there is a model  $\mathfrak{A}'$ with the properties that we need. We briefly sketch the argument.  All we need to do is show that $\text{eldiag}(\mathfrak{A}) \cup \{\phi\}$ is consistent and therefore has a model $\mathfrak{C}$ for then we can let $\mathfrak{A}'' = (\mathfrak{C} \upharpoonright L_{\Gamma,\phi})$ and $\mathfrak{A}' = (\mathfrak{C} \upharpoonright L_{\Gamma})$. Suppose otherwise, that is,  for some finite $T \subseteq \text{eldiag}(\mathfrak{A})$,  the theory $T \cup \{\phi\}$ is not consistent. Then $\phi \vdash \neg \bigwedge T$, but then since the diagram constants in $T$ are all new to $L_{\Gamma, \phi}$, $\phi \vdash \forall \overline{x}\neg \bigwedge T^*$ by \cite[Lemma 2.3.2]{Hodges} where $T^*$ is the result of replacing in every sentence in $T$ the new constants by new variables. But then $\mathfrak{B}\models  \forall \overline{x}\neg \bigwedge T^*$ and since  $\mathfrak{A} \equiv_{L_\Gamma} \mathfrak{B}$, we have that $\mathfrak{A}\models  \forall \overline{x}\neg \bigwedge T^*$ which is a contradiction since $\mathfrak{A}\models  \exists \overline{x} \bigwedge T^*$.  

\end{proof}

{The following two reduction cases may appear less interesting in themselves, but turn out to be useful in proving results on recursive enumerability (Proposition \ref{nonax} and Remark \ref{nonax3} below).}

{
\begin{Pro}
[Third Reduction Case] \label{red3}For any sentence $\phi$  of $L$ not involving the equality symbol, $ \vsim \phi$ iff $\phi$ is satisfiable.
\end{Pro}
\begin{proof}  An equality-free formula is satisfiable iff it is satisfiable in an infinite domain. If any such  formula is satisfiable, it is so in either an infinite domain or a finite one,  and if the latter then use ~\cite[Lem.\ 2.24]{bridge} to find an infinite domain where it is satisfiable as well. Now, observe that for any equality-free formula $\phi$  satisfiable in an infinite model $\mathfrak{A}$, we have that $\vsim \phi$. This is because any model $\mathfrak{B}$ for the empty language (so that $\mathfrak{B}$ is simply a set) can be vacuously elementarily extended (since there is no equality in the language and no predicate, constant or function symbols) to a model $\mathfrak{B}'$, also in the empty language, that can be expanded to a model  $\mathfrak{B}''$ of $\phi$ (just take $\mathfrak{B}''$ to be a model of $\phi$ of some cardinality bigger than or equal to that of $\mathfrak{A}$ -- this model exists using ~\cite[Lem.\ 2.24]{bridge} conjoined with the fact that $\mathfrak{A}\models \phi$).
\end{proof}}

{
\begin{Pro}
[Fourth Reduction Case] \label{red4}For any sentence $\phi$  of $L$ not involving any other predicate than the equality symbol `$=$', $\vsim \phi$ iff $\vdash \phi$.
\end{Pro}
\begin{proof}  We start  by observing that   $\phi$ has models of all sizes only if $\vsim \phi$. Suppose first  that $\phi$ has models of all sizes. Then let $\mathfrak{A}$ be a model for the empty language, so that  $\mathfrak{A}$ may be identified with set. But then surely it can be expanded to a model of $\phi$ by the hypothesis that $\phi$ has models of all sizes. Now we claim that 
$\vsim \phi$ iff $\phi$ has models of all \emph{finite} sizes. If $\vsim \phi$ then any finite set can be expanded to a model of $\phi$ and hence $\phi$ has models of \emph{all} finite cardinalities. If the latter, on the other hand, given any finite set we can find a model of $\phi$ of the same cardinality as the original set and when seen as a model in the empty language it will be trivially isomorphic to the original set. Moreover, by the compactness theorem, any formula $\phi$ has models of all cardinalities if it has models of all finite cardinalities. (If $\phi$ has models of all finite cardinalities then the theory $T$ formed by $\phi$ and the set of sentences `there are at least $n$ elements' for every $n$, is finitely satisfiable and by compactness, satisfiable in an infinite model, so, using the upwards L\"owenheim-Skolem theorem, in models of any infinite cardinality.) Assume that $\phi$ has models of all finite sizes. Since the language only has equality this means that $\phi$ is true in all finite models, as they are just sets. Suppose for  contradiction that  $\nvdash \phi$, that is, $\neg \phi$ has a model, but then by  \cite[Thm. 1]{Ash} it must have a finite model, which by hypothesis must also be model of $\phi$. Thus $\phi$ has models of all finite sizes iff $\vdash \phi$.
\end{proof}}

{A simple consequence of Proposition \ref{red4} is that,  in contrast to the propositional case \cite{Makinson}, here we do not always get that $\emptyset\vsim \phi$  if $\phi$ is consistent. In particular, if $\phi$ is any pure equality sentence such that $\not \vdash \phi$ we will also get that $\emptyset \not\vsim \phi$.}

\begin{Pro}[Characterization in terms of consistency]\label{consis} Let $\Gamma$ be a set of sentences and $\phi$ a sentence from $L$.  Then the following are equivalent:
\begin{itemize}
\item[$(i)$]$\Gamma \vsim \phi$

\item[$(ii)$] $\phi$ is consistent with every set  $\Delta$ of sentences in  the language $L_\Gamma$ that is consistent with $\Gamma$.
\end{itemize}

\end{Pro}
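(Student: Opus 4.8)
The plan is to prove the two implications of the equivalence separately, using the machinery of elementary diagrams and compactness already introduced for Proposition \ref{red2}.

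\emph{From $(i)$ to $(ii)$.} Suppose $\Gamma \vsim \phi$ and let $\Delta$ be a set of sentences in $L_\Gamma$ that is consistent with $\Gamma$; I want to show $\Delta \cup \{\phi\}$ is consistent. Since $\Gamma \cup \Delta$ is consistent, fix a model $\mathfrak{A}$ of $\Gamma \cup \Delta$ for the language $L_\Gamma$ (note $\Delta$ only uses symbols of $L_\Gamma$, so this makes sense). In particular $\mathfrak{A} \models \Gamma$, so by friendliness there is $\mathfrak{A}'$ with $\mathfrak{A} \prec_{L_\Gamma} \mathfrak{A}'$ and an expansion $\mathfrak{A}''$ to $L_{\Gamma,\phi}$ with $\mathfrak{A}'' \models \phi$. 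Since $\mathfrak{A} \prec_{L_\Gamma} \mathfrak{A}'$ and $\Delta$ is a set of sentences of $L_\Gamma$ true in $\mathfrak{A}$, we get $\mathfrak{A}' \models \Delta$, hence $\mathfrak{A}'' \models \Delta$ as well (the reduct of $\mathfrak{A}''$ to $L_\Gamma$ is $\mathfrak{A}'$ and $\Delta$-sentences only mention $L_\Gamma$-symbols). Thus $\mathfrak{A}'' \models \Delta \cup \{\phi\}$, so this set is consistent.

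\emph{From $(ii)$ to $(i)$.} Suppose $(ii)$ holds and let $\mathfrak{A} \models \Gamma$ be a model for $L_\Gamma$. Following the argument in Proposition \ref{red2}, it suffices to show that $\mathrm{eldiag}(\mathfrak{A}) \cup \{\phi\}$ is consistent: a model $\mathfrak{C}$ of it yields $\mathfrak{A}' = \mathfrak{C} \upharpoonright L_\Gamma$ with $\mathfrak{A} \prec_{L_\Gamma} \mathfrak{A}'$ (by \cite[Lemma 2.5.3]{Hodges}) and $\mathfrak{A}'' = \mathfrak{C} \upharpoonright L_{\Gamma,\phi}$ with $\mathfrak{A}'' \models \phi$, as required. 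Now take $\Delta := \mathrm{Th}_{L_\Gamma}(\mathfrak{A})$, the set of all $L_\Gamma$-sentences true in $\mathfrak{A}$. This $\Delta$ is consistent with $\Gamma$ (indeed $\mathfrak{A}$ witnesses it), so by $(ii)$, $\Delta \cup \{\phi\}$ is consistent; fix a model $\mathfrak{B} \models \Delta \cup \{\phi\}$ in $L_{\Gamma,\phi}$. By construction $\mathfrak{B} \equiv_{L_\Gamma} \mathfrak{A}$. I then run exactly the compactness-with-diagrams argument from Proposition \ref{red2}: if $\mathrm{eldiag}(\mathfrak{A}) \cup \{\phi\}$ were inconsistent, some finite $T \subseteq \mathrm{eldiag}(\mathfrak{A})$ gives $\phi \vdash \neg \bigwedge T$, hence $\phi \vdash \forall \overline{x}\, \neg \bigwedge T^*$ by \cite[Lemma 2.3.2]{Hodges} (the diagram constants being new to $L_{\Gamma,\phi}$), so $\mathfrak{B} \models \forall \overline{x}\, \neg \bigwedge T^*$; but $\mathfrak{A} \equiv_{L_\Gamma} \mathfrak{B}$ forces $\mathfrak{A} \models \forall \overline{x}\, \neg \bigwedge T^*$, contradicting $\mathfrak{A} \models \exists \overline{x}\, \bigwedge T^*$.

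The main thing to get right is the choice of $\Delta$ in the $(ii)\Rightarrow(i)$ direction: taking $\Delta$ to be the full $L_\Gamma$-theory of the given model $\mathfrak{A}$ is what lets condition $(ii)$ deliver a model of $\phi$ that is elementarily equivalent to $\mathfrak{A}$ over $L_\Gamma$, which is precisely the input the diagram argument needs. Everything else is bookkeeping about reducts and a verbatim reuse of the compactness argument already given, so no genuinely new obstacle arises; one should just be careful that $\Delta$ is expressed purely in $L_\Gamma$ so that "consistent with $\Gamma$" and the application of $(ii)$ are legitimate.
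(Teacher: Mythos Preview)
Your proof is correct and follows essentially the same approach as the paper's: both directions use the same ideas (for $(i)\Rightarrow(ii)$, push $\Delta$ along the elementary extension; for $(ii)\Rightarrow(i)$, take $\Delta=\mathrm{Th}_{L_\Gamma}(\mathfrak{A})$ and invoke the Robinson-diagram compactness argument from Proposition~\ref{red2}). The only cosmetic difference is that the paper argues the second implication by contraposition while you do it directly, but the underlying construction is identical.
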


\begin{proof} $(i) \implies (ii)$: Suppose $\Gamma \vsim \phi$ and $\Delta$  is  a set of sentences in  the language $L_\Gamma$ consistent with $\Gamma$, so that there is a model $\mathfrak{A} \models \Gamma \cup \Delta$ for the language $L_\Gamma$. But then there is a model   $\mathfrak{A}'$  for the same language such that $\mathfrak{A}\preceq_{L_\Gamma}
 \mathfrak{A}'$ (and thus  $\mathfrak{A}'\models \Delta$) and, furthermore,  $\mathfrak{A}'$ can be expanded to a model $\mathfrak{A}''$ for the language $L_{\Gamma,\phi}$ for which $\mathfrak{A}''\models \phi$. Consequently, $\phi$ is consistent with $ \Delta$.

$(ii) \implies (i)$: Assume that $\Gamma \not \vsim \phi$, that is, there  is  a model  $\mathfrak{A} $ for the language $L_\Gamma$ such that $\mathfrak{A} \models \Gamma$ and for any  $\mathfrak{A}'$ with $\mathfrak{A}\preceq_{L_\Gamma}
 \mathfrak{A}'$,  any expansion $\mathfrak{A}''$ of $\mathfrak{A}'$ to the language $L_{\Gamma,\phi}$  is such that $\mathfrak{A}'' \not \models \phi$, i.e. $\mathfrak{A}'' \models \neg\phi$. Take now $\Delta=\text{Th}_{L_\Gamma}(\mathfrak{A})$ (the complete first-order theory of $\mathfrak{A}$ in $L_\Gamma$). We show that $\phi$ is not consistent with $\Delta$ (clearly $\Delta$ is consistent with $\Gamma$ by construction). Suppose for a contradiction that $\Delta \cup \{\phi\}$ is consistent; then one can show using Robinson diagrams as before that there is a model $\mathfrak{A}'' \models \phi$ for the language $L_{\Gamma,\phi}$ such that $\mathfrak{A}\preceq_{L_\Gamma}
 (\mathfrak{A}''\upharpoonright L_\Gamma)$ (where $(\mathfrak{A}''\upharpoonright L_\Gamma)$ is the restriction of $\mathfrak{A}''$ to the language $L_\Gamma$). Choosing  $\mathfrak{A}'$ as $\mathfrak{A}''$ restricted to $L_\Gamma$, and noting that  $\mathfrak{A}''$ is an expansion of $\mathfrak{A}'$ to the language $L_{\gamma, \phi}$ we also have  $\mathfrak{A}''$ does not satisfy $\phi$, giving the desired contradiction.
 \end{proof}

The characterization in terms of consistency can then be refined as follows, with the same verifications as in \cite{Makinson}. {Observe that those verifications made use of Craig's interpolation \cite{Craig}  for classical consequence and hence needed the presence of a falsum or verum connective in the language (in the presence of equality these are definable). }

\begin{Pro}[Refinement] Let $\Gamma$ be a set of sentences and $\phi$ a sentence from $L$. Then the following are equivalent:
\begin{itemize}
\item[$(i)$]$\Gamma \vsim \phi$

\item[$(ii)$] $\Gamma \vdash \psi$  for every sentence $\psi$ of the language $L_\Gamma$ with $\phi \vdash \psi$.
\item[$(iii)$] $\Gamma \vdash \psi$  for every sentence $\psi$ of the language $L_\Gamma \cap L_\phi$ with $\phi \vdash \psi$.
\end{itemize}

\end{Pro}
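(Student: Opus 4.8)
The plan is to prove the chain $(i)\Rightarrow(ii)\Rightarrow(iii)\Rightarrow(i)$, relying on Proposition~\ref{consis} (the characterization in terms of consistency), the compactness theorem for ordinary first-order consequence $\vdash$, and Craig's interpolation theorem for $\vdash$. Throughout one may assume $\Gamma$ and $\phi$ consistent: if either is inconsistent all three clauses are immediately seen to hold, respectively fail, together.

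For $(i)\Rightarrow(ii)$: assume $\Gamma\vsim\phi$ and let $\psi$ be a sentence of $L_\Gamma$ with $\phi\vdash\psi$. Were $\Gamma\not\vdash\psi$, the singleton $\{\neg\psi\}$ would be a set of $L_\Gamma$-sentences consistent with $\Gamma$, so by Proposition~\ref{consis} the sentence $\phi$ would be consistent with $\{\neg\psi\}$, contradicting $\phi\vdash\psi$; hence $\Gamma\vdash\psi$. The implication $(ii)\Rightarrow(iii)$ is immediate, since every sentence of $L_\Gamma\cap L_\phi$ is in particular a sentence of $L_\Gamma$, so the class of ``test sentences'' in $(iii)$ is contained in that of $(ii)$.

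For $(iii)\Rightarrow(i)$ I would again invoke Proposition~\ref{consis}: let $\Delta$ be a set of $L_\Gamma$-sentences consistent with $\Gamma$ and suppose, for contradiction, that $\Delta\cup\{\phi\}$ is inconsistent. By compactness some finite subset of it is inconsistent, and this subset must contain $\phi$ --- otherwise a finite subset of $\Delta$ alone would be inconsistent, contradicting the consistency of $\Gamma\cup\Delta$ --- so $\phi\vdash\neg\bigwedge\Delta_0$ for some finite nonempty $\Delta_0\subseteq\Delta$. Now $\neg\bigwedge\Delta_0$ is a sentence of $L_\Gamma$, so Craig interpolation applied to $\phi\vdash\neg\bigwedge\Delta_0$ furnishes a sentence $\theta$ whose vocabulary lies in that shared by $\phi$ and $\neg\bigwedge\Delta_0$ --- whence $\theta$ is a sentence of $L_\Gamma\cap L_\phi$ (recalling that $=$ belongs to every sublanguage) --- with $\phi\vdash\theta$ and $\theta\vdash\neg\bigwedge\Delta_0$. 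Applying $(iii)$ with $\psi:=\theta$ yields $\Gamma\vdash\theta$, so $\Gamma\vdash\neg\bigwedge\Delta_0$, whence $\Gamma\cup\Delta$ is inconsistent --- a contradiction. Thus $\phi$ is consistent with every such $\Delta$, and Proposition~\ref{consis} gives $\Gamma\vsim\phi$.

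The one step that is more than bookkeeping is $(iii)\Rightarrow(i)$ --- equivalently, that the test sentences in $(ii)$ may be cut down to the common sublanguage $L_\Gamma\cap L_\phi$ --- and this is precisely where Craig interpolation does the work; it is the only non-routine ingredient. It is worth stressing that, although compactness and interpolation fail for $\vsim$ itself, here they are used only for classical $\vdash$, for which they remain available.
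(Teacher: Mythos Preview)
Your proof is correct. The paper itself does not spell out an argument for this proposition, simply noting that it follows ``with the same verifications as in \cite{Makinson}'' from the consistency characterization (Proposition~\ref{consis}); your cycle $(i)\Rightarrow(ii)\Rightarrow(iii)\Rightarrow(i)$, using Proposition~\ref{consis} together with compactness and Craig interpolation for classical $\vdash$, is exactly the natural way to make that reference explicit, and your closing remark that these classical tools remain available even though their $\vsim$-analogues fail is well placed.
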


\begin{Rmk}\label{use3}
\emph{The characterization in terms of consistency and its refinements all fail for the variation of Definition \ref{friend} where $\mathfrak{A}' = \mathfrak{A}$ (although the direction $(i) \implies (ii)$ in the characterization in terms of consistency still holds). This corresponding friendliness relation is denoted as $\vsim^{\text{{\bf R}}_1}_{\text{{\bf S}}_1}$ in the Appendix. If one works in a language $L^-$ without equality, it is possible to use the counterexample in \cite[p. 6]{Makinson} to see this (there $\Gamma$ is the set of all first-order consequences of $\forall x Px$ and $\phi$ is the sentence $\exists x \exists y (Rxy \wedge \neg Ryx)$). If on the other hand, we are working in full $L$, that example no longer does the trick because $\exists x \exists y (Rxy \wedge \neg Ryx)$ is not consistent with $\exists x \forall y(x=y)$ while $\forall x Px$  is, and we need to appeal to a more subtle one. The answer is given using non-resplendent structures. (For the theory of resplendent structures the reader can consult \cite{Barwise, Kossak, Hodges}.) A structure $\mathfrak{A}$ is \emph{resplendent} if all existential second-order formulas, that is, formulas of the form $\exists R \phi(R)$ where $\phi$ is a first-order formula, that are true in some elementary extensions of  $\mathfrak{A}$, are already true in  $\mathfrak{A}$.  We will borrow the following nice example from \cite{Kossak}: $\text{Th}(\mathds{Z}, +, \times)$, the complete theory of the structure   $(\mathds{Z}, +, \times)$, is not friendly to the sentence $$\phi(I) := \exists x I(x) \wedge \exists x \neg I(x) \wedge \forall x \forall y (x+1=y \rightarrow (I(x) \leftrightarrow I(y))),$$
as the structure $(\mathds{Z}, +, \times)$ does not have any proper subset of its universe closed under successors and predecessors. On the other hand, every set of sentences of the language of $\text{Th}(\mathds{Z}, +, \times)$ consistent with $\text{Th}(\mathds{Z}, +, \times)$ is also consistent with $\phi(I)$ as one can see by taking  any proper elementary extension of $(\mathds{Z}, +, \times)$. } 
\end{Rmk}

\begin{Rmk}
\emph{However, if one  restricts attention to resplendent models only, the version of $\vsim$ introduced in Definition \ref{friend} and $\vsim^{\text{{\bf R}}_1}_{\text{{\bf S}}_1}$, which simply states the existence of an expansion, become equivalent.  Clearly, if $\Gamma \vsim^{\text{{\bf R}}_1}_{\text{{\bf S}}_1} \phi$ in the second sense, then $\Gamma \vsim \phi$ in the sense of Definition \ref{friend} (any model is trivially an elementary extension of itself). On the other hand if $\Gamma \vsim \phi$ in the sense of Definition \ref{friend} then, for any resplendent model of $\Gamma$, if the elementary extension that witnesses this fact can be expanded to a model of $\phi$, then,  by resplendence, the original model already can be expanded in such a way.  A useful fact of resplendent structures is that any model can be elementarily embedded in a resplendent structure of the same cardinality as the original model \cite[Claim (i), p. 534]{Barwise}. Now let us show that if $\Gamma \not\vsim \phi$ when considering all models then  $\Gamma \not\vsim \phi$ when restricting to resplendent models. Assume the sense of Definition \ref{friend} again that $\Gamma \not\vsim \phi$ and take a model $\mathfrak{A}\models \Gamma$ witnessing this fact, so no elementary extension of it can be expanded to a model of $\phi$. But then there must be a resplendent elementary extension   $\mathfrak{B}$ of $\mathfrak{A}$ such that the same occurs (for otherwise, $\mathfrak{B}$  itself could already be expanded to a model of $\phi$ contradicting our choice of $\mathfrak{A}$). Hence, if the $\vsim$ relation is defined on resplendent models alone, one can make $\mathfrak{A} = \mathfrak{A}'$ in Definition \ref{friend} without loss of generality. } 
\end{Rmk}

{\begin{Rmk}
\emph{
$\vsim^{\text{{\bf R}}_1}_{\text{{\bf S}}_1}$ is roughly the complement of a relation that was articulated  by de Bouvère  \cite{Bou} in
the context of the theory of definition, to serve as a tool for showing the non-definability of a
predicate $P$ in a first-order theory. To bring out the connection, we state a certain instance of de Bouvère's Lemma 1 in
our notation. Let $\Delta$ be a first-order theory and $P$ a predicate of its language $L=L_\Delta$. Write $L\setminus P$ for
that language less $P$.  Then a sufficient condition for $P$ to be undefinable in $Cn_{L\setminus P}(\Delta)$ (where this is the set of consequences of $\Delta$ in the language $L\setminus P$) is that there is a
model of $Cn_{L\setminus P}(\Delta)$ for the language  $L\setminus P$  that cannot be extended to a model of $\Delta$ by giving an interpretation of $P$. In
the case that $\Delta$ is a singleton $\{\phi\}$ the condition
becomes: there is a model of $Cn_{L\setminus P}(\phi)$  that cannot be extended to a model of $\phi$ by giving an
interpretation of $P$; in other words, it requires that $Cn_{L\setminus P}(\phi)\not \vsim^{\text{{\bf R}}_1}_{\text{{\bf S}}_1} \phi$. In this way, a special case
of de Bouvère's condition (the one where $\Delta$ is a singleton) requires that a certain pair is not in the relation  $\ \vsim^{\text{{\bf R}}_1}_{\text{{\bf S}}_1}$.}
\end{Rmk}}

Next we mention some more properties that carry over from the propositional case:

\begin{Pro} Let $\Gamma, \Delta$ be sets of sentences and $\phi, \psi$  sentences from $L$. Then 

\begin{itemize}
\item[(i)] {\bf (Right weakening)}\emph{:} $\Gamma \vsim \phi \vdash \psi$ only if $\Gamma \vsim \psi$.

\item[(ii)] {\bf (Singleton cumulative transitivity)}\emph{:} whenever $\Gamma \vsim \phi$ and $\Gamma, \phi \vsim \psi$  we have that  $\Gamma \vsim \psi$.
\item[(iii)] {\bf (Local left strengthening)}\emph{:} suppose that $L_\Delta \subseteq L_\Gamma$. Then  $ \Delta \vdash \Gamma \vsim \phi$  only if $\Delta \vsim \phi$.
\item[(iv)] {\bf (Local left equivalence)}\emph{:} suppose that $L_\Delta \subseteq L_\Gamma$. Then  $  \Gamma \vsim \phi$ and $\Gamma \dashv \vdash \Delta$  only if $\Delta \vsim \phi$.

\item[(v)] {\bf (Local monotony)}\emph{:} suppose that $L_\Delta \subseteq L_\Gamma$, $  \Gamma \vsim \phi$ and $\Gamma \subseteq \Delta$  only if $\Delta \vsim \phi$.
\item[(vi)] {\bf (Local disjunction in the premisses)}\emph{:} Suppose $L_\psi \subseteq L_{\Gamma,\phi} $ and  $L_\phi \subseteq L_{\Gamma,\psi} $.
Then
 $  \Gamma, \phi \vsim \chi$ 
 and  $  \Gamma, \psi \vsim \chi$  together imply $  \Gamma, \phi \vee \psi \vsim \chi$.
\item[(vii)] {\bf (Proof by exhaustion)}:  $  \Gamma, \phi \vsim \chi$ 
 and  $  \Gamma, \neg \phi \vsim \chi$  together imply $  \Gamma\vsim \chi$.
\end{itemize}

\end{Pro}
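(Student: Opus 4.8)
The plan is to derive all seven clauses from the Characterization in terms of consistency (Proposition \ref{consis}) together with its Refinement, so that no fresh model-theoretic construction is needed: once each assertion is reformulated through one of these characterizations it reduces to a routine exercise in classical first-order provability, the only delicate point being to keep track of which sublanguage each auxiliary sentence or set is drawn from.

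Clauses (i) and (iii)--(vii) I would handle via the Refinement. For (i): to establish $\Gamma\vsim\psi$ it is enough, by the Refinement, to check $\Gamma\vdash\xi$ for every sentence $\xi$ of $L_\Gamma$ with $\psi\vdash\xi$; but then $\phi\vdash\psi\vdash\xi$, so $\Gamma\vsim\phi$ and the Refinement give $\Gamma\vdash\xi$. For (iii): any sentence $\xi$ of $L_\Delta$ with $\phi\vdash\xi$ is, since $L_\Delta\subseteq L_\Gamma$, a sentence of $L_\Gamma$, so $\Gamma\vdash\xi$ by $\Gamma\vsim\phi$, and hence $\Delta\vdash\xi$ because $\Delta\vdash\Gamma$; the Refinement applied to $\Delta$ then yields $\Delta\vsim\phi$. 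Clauses (iv) and (v) are then immediate instances of (iii): in (iv) one reads $\Delta\vdash\Gamma$ off $\Gamma\dashv\vdash\Delta$, and in (v) one has $\Delta\vdash\Gamma$ from $\Gamma\subseteq\Delta$ (and in fact the two language hypotheses together force $L_\Gamma=L_\Delta$). For (vi), first observe that $L_\psi\subseteq L_{\Gamma,\phi}$ and $L_\phi\subseteq L_{\Gamma,\psi}$ force $L_{\Gamma,\phi}=L_{\Gamma,\psi}=L_{\Gamma,\phi\vee\psi}$; writing $L^\ast$ for this common language, any sentence $\xi$ of $L^\ast$ with $\chi\vdash\xi$ satisfies both $\Gamma,\phi\vdash\xi$ and $\Gamma,\psi\vdash\xi$ by the two hypotheses and the Refinement, whence $\Gamma,\phi\vee\psi\vdash\xi$ by classical reasoning by cases, and the Refinement closes the argument. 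For (vii) one runs the same case split with $\neg\phi$ in place of $\psi$: here $L_\Gamma\subseteq L_{\Gamma,\phi}=L_{\Gamma,\neg\phi}$, so any sentence $\xi$ of $L_\Gamma$ with $\chi\vdash\xi$ is derivable both from $\Gamma\cup\{\phi\}$ and from $\Gamma\cup\{\neg\phi\}$, hence from $\Gamma$; alternatively (vii) follows from (vi) and (iv) using the tautology $\phi\vee\neg\phi$.

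For (ii) the Refinement is awkward --- eliminating $\phi$ from a derivation is precisely what $\Gamma\vsim\phi$ delivers, but not in the shape the Refinement presents it --- so I would argue directly from Proposition \ref{consis}. Let $\Delta$ be a set of sentences of $L_\Gamma$ consistent with $\Gamma$. Then $\Gamma\cup\Delta$ is itself a consistent set of sentences of $L_\Gamma$, so $\Gamma\vsim\phi$ makes $\Gamma\cup\Delta\cup\{\phi\}$ consistent; consequently $\Delta$, regarded as a set of sentences of $L_{\Gamma,\phi}$, is consistent with $\Gamma\cup\{\phi\}$, and $\Gamma,\phi\vsim\psi$ then yields that $\psi$ is consistent with $\Delta$. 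Since $\Delta$ was arbitrary, Proposition \ref{consis} gives $\Gamma\vsim\psi$.

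The only genuine point of care throughout is the bookkeeping of languages: each appeal to a characterization demands that the auxiliary sentence or set lie in a particular sublanguage, and the side-conditions in clauses (iii)--(vii) are calibrated exactly so that this is guaranteed. The one spot needing a small preparatory move is (ii), where $\Delta$ must first be enlarged to $\Gamma\cup\Delta$ before $\Gamma\vsim\phi$ can be invoked, together with (vi), where the coincidence of the three languages must be checked first; beyond that the verifications are mechanical, exactly as in the propositional case of \cite{Makinson}.
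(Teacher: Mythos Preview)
Your proof is correct but takes a genuinely different route from the paper's. The paper argues (ii), (iii) and (vi) directly from Definition~\ref{friend}, chasing models and elementary embeddings --- for (ii), composing two successive applications of the definition along a chain $\mathfrak{A}\prec_{L_\Gamma}\mathfrak{A}'\prec_{L_{\Gamma,\phi}}\mathfrak{A}'''$ and expanding at the end; for (iii), expanding a model of $\Delta$ to one of $\Gamma$ and then invoking the hypothesis; for (vi), working by contraposition from a witnessing model of $\Gamma\cup\{\phi\vee\psi\}$ --- with (i) immediate and (iv), (v), (vii) as corollaries. You instead reduce everything to Proposition~\ref{consis} and its Refinement, so that no models are ever touched and each clause becomes a short exercise in classical derivability with language bookkeeping. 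Your approach is more uniform and economical: having already invested in the characterizations, you harvest all seven items mechanically. The paper's direct verifications, on the other hand, are self-contained and make the model-theoretic content visible; this matters later in the Appendix, where one wants to see which clauses survive for variants $\vsim^{\mathbf{R}_i}_{\mathbf{S}_j}$ for which the consistency characterization fails, and where your route would therefore not be available.
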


\begin{proof} (i) follows from the definition, while (iv), (v) are immediate consequences of (iii) and (vii) is immediate from (vi). The verifications for the remainder are essentially the same as for the propositional case, as set out in \cite{Makinson}, but we run through them for the reader's convenience. Suppose that $\Gamma \vsim \phi$ and $\Gamma, \phi \vsim \psi$. Take any model $\mathfrak{A} \models \Gamma$ for the language $L_\Gamma$. By  hypothesis, 
there is a model  $\mathfrak{A}'$ for the same language such that both $\mathfrak{A}\preceq_{L_\Gamma}
 \mathfrak{A}'$ and $\mathfrak{A}'$ can be expanded to a model $\mathfrak{A}''$ for the language $L_{\Gamma,\phi}$ for which $\mathfrak{A}''\models \phi$. But now, by hypothesis again, one may find $\mathfrak{A}'''$ with $\mathfrak{A}''\preceq_{L_{\Gamma,\phi}}
 \mathfrak{A}'''$ (and hence $\mathfrak{A}\preceq_{L_\Gamma}
 (\mathfrak{A}'''\upharpoonright L_\Gamma$)) such that $ \mathfrak{A}'''$ (and hence $(\mathfrak{A}'''\upharpoonright L_\Gamma$)) can be expanded to a model $ \mathfrak{A}'''' \models \psi$.
 
(iii): Assume  that  $L_\Delta \subseteq L_\Gamma$ and $ \Delta \vdash \Gamma \vsim \phi$. Take any model $\mathfrak{A} \models \Delta$ for the language $L_\Delta$. By hypothesis, since $ \Delta \vdash \Gamma$, any expansion $\mathfrak{A}' $ of $\mathfrak{A}$ to $L_\Gamma$ must be such that  $\mathfrak{A}' \models \Gamma$. But then, since $\Gamma \vsim \phi$, we have that there is a model  $\mathfrak{A}''$ for the same language such that $\mathfrak{A}'\preceq_{L_\Gamma}
 \mathfrak{A}''$ and, furthermore,  $\mathfrak{A}''$ can be expanded to a model $\mathfrak{A}'''$ for the language $L_{\Gamma,\phi}$ for which $\mathfrak{A}'''\models \phi$. So $\mathfrak{A} = (\mathfrak{A}'  \upharpoonright L_\Delta) \preceq_{L_\Delta}
( \mathfrak{A}''\upharpoonright L_\Delta)$, and we have that $\Delta \vsim \phi$ as desired.

(vi): Suppose that $  \Gamma, \phi \vee \psi \not \vsim \chi$, so there is a model  $\mathfrak{A}$ for the language $L_{\Gamma,\phi,\psi}$ such that  $\mathfrak{A}\models \Gamma $ and $\mathfrak{A}\models \phi \vee \psi $ such that there is no  model  $\mathfrak{A}'$ for $L_{\Gamma,\phi,\psi}$ such that both $\mathfrak{A}\preceq_{L_{\Gamma,\phi,\psi}}
 \mathfrak{A}'$ and  $\mathfrak{A}'$ can be expanded  to a model $\mathfrak{A}''$ for the language $L_{\Gamma,\phi,\psi,\chi}$ for which $\mathfrak{A}''\models \chi$. By the hypotheses, $L_{\Gamma,\phi,\psi} = L_{\Gamma,\phi} = L_{\Gamma, \psi}$, so either $\mathfrak{A}\models \phi$ or $ \mathfrak{A}\models \psi $, and thus either  $  \Gamma, \phi  \not\vsim \chi$ or  $  \Gamma, \psi  \not\vsim \chi$ as desired.
\end{proof}

\section{First-order discontinuities}\label{dis}

We know from \cite{Makinson} that compactness holds in quite a strong form in the propositional context. But it fails in the first-order case because we are requiring, in Definition \ref{friend} of first-order friendliness, that the model $\mathfrak{A}''$ has the same domain as the model $\mathfrak{A}'$ (which in turn is elementarily equivalent to  $\mathfrak{A}$).  This permits the construction of a counter-example to compactness as follows.

\begin{Pro}[Compactness fails for $\vsim $]\label{comp} There is a set $\Gamma$  of sentences such that $\Gamma \vsim \phi$ but for no finite set $\Gamma_0\subseteq \Gamma$ do we have that $\Gamma_0 \vsim \phi$.
\end{Pro}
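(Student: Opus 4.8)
The plan is to exhibit a single set $\Gamma$ and sentence $\phi$ where friendliness of $\Gamma$ to $\phi$ hinges on an \emph{infinitary} feature of all models of $\Gamma$ — specifically, that their domains are infinite — while every finite subset $\Gamma_0$ still admits a finite model, which by the domain-preservation clause in Definition \ref{friend} blocks $\Gamma_0 \vsim \phi$. The natural choice: let $L_\Gamma$ contain a single binary relation symbol $<$ (or a unary function for successor), and let $\Gamma$ be the standard axioms forcing $<$ to be a strict linear order with no last element together with the sentences $\lambda_n$ asserting ``there exist at least $n$ elements'' for every $n$ — equivalently, take $\Gamma = \mathrm{Th}(\mathbb{N}, <)$, or more simply just the infinitely many sentences $\lambda_n$ in the empty-vocabulary-plus-equality language. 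For $\phi$, pick a sentence in an expanded language $L_\phi$ with a new unary predicate $I$ (or a new successor function) that can only be satisfied in an infinite domain — for instance $\phi$ expressing ``$f$ is an injection that is not surjective'' for a new unary function symbol $f$, i.e.\ Dedekind-infiniteness. Then any model of $\Gamma$ is infinite, hence (after passing to $\mathfrak{A}'=\mathfrak{A}$, or any elementary extension) its domain admits such an $f$, so $\Gamma \vsim \phi$; but a finite $\Gamma_0$ is satisfied by some finite structure $\mathfrak{A}_0$, every elementary extension of which is again that same finite structure, whose finite domain cannot carry a Dedekind-self-embedding, so $\mathfrak{A}_0$ witnesses $\Gamma_0 \not\vsim \phi$.

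Concretely, the steps I would carry out are as follows. First, fix the languages: $L_\Gamma$ is the language of equality alone (purely logical vocabulary), $L_\phi$ adds one unary function symbol $f$, and I set $\Gamma = \{\lambda_n : n \geq 1\}$ where $\lambda_n := \exists x_1 \cdots \exists x_n \bigwedge_{i<j} x_i \neq x_j$, and $\phi := \forall x \forall y (f(x)=f(y) \rightarrow x=y) \wedge \exists y \forall x (f(x) \neq y)$, treating $f$ as a predicate as the paper allows. Second, verify $\Gamma \vsim \phi$: given any $\mathfrak{A} \models \Gamma$, its domain $A$ is infinite; take $\mathfrak{A}' = \mathfrak{A}$ (legitimate since $\mathfrak{A}\prec_{L_\Gamma}\mathfrak{A}$); since $A$ is infinite, there is an injection $A \to A$ that is not onto, so expand $\mathfrak{A}'$ by interpreting $f$ as such a map to get $\mathfrak{A}'' \models \phi$. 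Third, show no finite $\Gamma_0$ works: any finite $\Gamma_0 \subseteq \Gamma$ is contained in $\{\lambda_1,\dots,\lambda_N\}$ for some $N$, hence is satisfied by a structure $\mathfrak{A}_0$ with exactly $N$ elements; every $\mathfrak{A}' $ with $\mathfrak{A}_0 \prec_{L_\Gamma} \mathfrak{A}'$ has a domain elementarily equivalent to an $N$-element set in the language of pure equality, hence also has exactly $N$ elements (the sentence ``there are exactly $N$ elements'' is first-order and true in $\mathfrak{A}_0$); and no expansion of a finite structure to interpret $f$ can satisfy $\phi$, since an injection on a finite set is surjective. Hence $\Gamma_0 \not\vsim \phi$, completing the proof.

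The main obstacle — really the only subtlety — is making sure the domain-preservation clause is genuinely doing the work, i.e.\ that the elementary-extension step cannot be exploited to ``grow'' a finite model of $\Gamma_0$ into an infinite one. This is exactly why I choose $L_\Gamma$ to be the pure equality language (or at least a language in which finiteness of a particular cardinality is expressible and preserved under elementary equivalence): in that setting ``exactly $N$ elements'' is a single first-order sentence, so every model elementarily equivalent to a finite model is finite of the same size, and the clause $\mathfrak{A}'' $ has the same domain as $\mathfrak{A}'$ then forces finiteness all the way up. One should also note the contrast with the propositional case flagged in the text: there compactness holds because there is no analogous ``domain'' parameter to freeze. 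I would close by remarking that the same phenomenon can be packaged via $\phi$ using a fresh unary predicate $I$ in the style of the earlier Remark, but the Dedekind-infiniteness formulation is the cleanest and needs no auxiliary order.
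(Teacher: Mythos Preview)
Your proposal is correct and essentially identical to the paper's proof: the paper takes $\Gamma=\{\lambda_n:n\geq 1\}$ in the pure equality language and lets $\phi$ say that a fresh binary relation $R$ is an injective, non-surjective total map on the domain, then argues exactly as you do (infinite models of $\Gamma$ admit such a map; finite models of $\Gamma_0$ have only same-cardinality elementary extensions, which cannot). Your use of a unary function symbol in place of the relational encoding is a cosmetic variant you yourself flag as admissible.
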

\begin{proof}
Consider the first-order theory and sentence $$\Gamma := \{\text{`there are at least $n$ elements'}   \ | \ n > 0  \}$$  
 $$ \phi := \text{`the relation $R$ is an injective but not surjective total mapping on the domain'}.$$ It is easy to see that $\Gamma \vsim \phi$: any model $\mathfrak{A}\models \Gamma$ has to be infinite, so it can clearly be expanded to a model of $\phi$ (which simply restates Dedekind's definition of infinity). On the other hand, if $\Gamma_0\subseteq \Gamma$ is finite, it would surely have a sufficiently large finite model $\mathfrak{A}$. Suppose for a contradiction  that $\Gamma_0 \vsim \phi$, so there is a model  $\mathfrak{A}'$ for the  language $L_{\Gamma_0}$ such that $\mathfrak{A}\preceq_{L_{\Gamma_0}}
 \mathfrak{A}'$ (observe that since $\mathfrak{A}$ is finite and we are assuming that the language contains equality,  $\mathfrak{A}'$ must be the same size)  and, furthermore,  $\mathfrak{A}'$ can be expanded to a model $\mathfrak{A}''$ for the language $L_{\Gamma_0,\phi}$ for which $\mathfrak{A}''\models \phi$, but this is a contradiction as it would imply that  $\mathfrak{A}'$  is infinite.
\end{proof}

\begin{Rmk}\label{comp2}\emph{The argument for Proposition \ref{comp}  works equally well for the variant of Definition \ref{friend} where $\mathfrak{A} = \mathfrak{A}'$.
On the other hand it does not work when the language lacks equality; indeed, in that case we have a quite trivial (and rather uninteresting) form of compactness. Let $\phi$ be a sentence in a language $L^-_\phi$ and $\Gamma$  a set of sentences in a language $L^-_\Gamma$. If $\Gamma \vsim \phi$, either $\phi$ is inconsistent or not. If the first, $\Gamma$ itself must be inconsistent, so by compactness for $\vdash$, there is a finite subset $\Gamma_0\subseteq \Gamma$ that is inconsistent. Trivially, $\Gamma_0 \vsim \phi$ in this case. If, on the other hand, $\phi$ is consistent we have that $\emptyset \vsim \phi$. To see this take any model $\mathfrak{A}$ of $\emptyset$, that is, any set $A$, since $\emptyset$ is a theory in the empty language, and consider a model $\mathfrak{B}\models \phi$. Either $|B| \leq |A|$ or $|A| \leq |B|$.   If the former, that is $|B| \leq |A|$, by a 
known result of first-order logic without equality~\cite[Lem.\ 2.24]{bridge} or \cite[Ch. IV, \S 1]{ack} (stated more generally  in \cite[Lemma 3]{Badia}), there is a model $\mathfrak{C} \models\phi$ such that $|B| \leq |C| =|A|$. Hence, letting the set $C$ be considered as a model $\mathfrak{A}'$ for $\emptyset$ we have that it is a trivial (since the language is empty) elementary extension of $\mathfrak{A}$, so we are done. If, on the other hand, we have the latter possibility, then there is an injection from $A$ into $B$, and hence letting $\mathfrak{A}'= A$ and  $\mathfrak{C}=\mathfrak{B}$  we get the models  needed to witness $\emptyset \vsim \phi$.  Incidentally, this is a simple and naturally arising example of proof by cases using an undecidable criterion (whether $\phi$ is consistent or not) for the two cases (cf.\cite{kle2}).}
\end{Rmk}

We can also construct a counter-example to interpolation.

\begin{Pro}[Interpolation fails for the relation $\vsim $] \label{inter}There is a set of sentences $\Gamma$ and sentence $\phi$ such that $\Gamma \vsim \phi$ but there is no interpolant $\psi$  in the language $L_\Gamma \cap L_\phi$ such that  $\Gamma \vsim \psi$ and  $\psi \vsim \phi$.
\end{Pro}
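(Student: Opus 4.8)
The plan is to exploit the compactness failure from Proposition \ref{comp} as a source of interpolation failure, since a broken interpolant would let us reconstruct a finite witness. Concretely, I would take $\Gamma$ and $\phi$ exactly as in the proof of Proposition \ref{comp}: $\Gamma = \{\text{`there are at least } n \text{ elements'} \mid n>0\}$ in the pure equality language, and $\phi$ the sentence (in a language with one extra binary relation symbol $R$) asserting that $R$ is an injective, total, non-surjective map on the domain. We already know $\Gamma \vsim \phi$. Note that $L_\Gamma$ is the pure equality language, so $L_\Gamma \cap L_\phi = L_\Gamma$, i.e.\ a putative interpolant $\psi$ must be a sentence of pure first-order logic with equality over the empty signature.

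The key observation is that by the First Reduction Case, since $L_\psi \subseteq L_\Gamma$, the relation $\Gamma \vsim \psi$ collapses to $\Gamma \vdash \psi$; and similarly the relevant consequences of $\psi$ in $L_\Gamma$ are governed by ordinary logic. So suppose toward a contradiction that such a $\psi$ exists with $\Gamma \vsim \psi$ and $\psi \vsim \phi$. From $\Gamma \vsim \psi$ we get $\Gamma \vdash \psi$, and then by compactness for $\vdash$ there is a finite $\Gamma_0 \subseteq \Gamma$ with $\Gamma_0 \vdash \psi$; equivalently, $\psi$ holds in every structure with at least $N$ elements, for some fixed $N$. In particular $\psi$ has arbitrarily large finite models, indeed a finite model $\mathfrak{A}$ of any size $\geq N$. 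Now I would feed such a finite $\mathfrak{A}$ into $\psi \vsim \phi$: applying Definition \ref{friend} to the model $\mathfrak{A} \models \psi$ over $L_\psi = L_\Gamma$, we obtain $\mathfrak{A}'$ with $\mathfrak{A} \prec_{L_\Gamma} \mathfrak{A}'$ — hence, since the language contains equality and $\mathfrak{A}$ is finite, $\mathfrak{A}'$ has exactly the same (finite) cardinality as $\mathfrak{A}$ — together with an expansion $\mathfrak{A}''$ of $\mathfrak{A}'$ satisfying $\phi$. But $\phi$ asserts Dedekind-infinity of the domain, so $\mathfrak{A}''$, and hence $\mathfrak{A}'$, must be infinite, contradicting finiteness.

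The remaining bookkeeping is to handle the possibility that $\psi$ is inconsistent or has no large finite models: if $\Gamma_0 \vdash \psi$ forces $\psi$ to hold in all sufficiently large structures, then in particular $\psi$ is consistent and has finite models of unbounded size, so the argument above goes through; there is no residual case. I would also remark that, since $\Gamma \vdash \psi$ and $\psi \vdash \phi$ would be the classical interpolation demand, what this shows is precisely that the $\forall\exists$ relaxation built into $\vsim$ is not enough to rescue interpolation once the domain-preservation constraint of Definition \ref{friend} is in force — the same feature that killed compactness. The main obstacle, such as it is, is simply making sure the direction of each reduction (First Reduction Case for $\Gamma \vsim \psi$, and the direct unfolding of Definition \ref{friend} for $\psi \vsim \phi$) is applied correctly and that the cardinality-rigidity of finite structures under elementary equivalence with equality is invoked cleanly; no deep new idea is needed beyond recycling the Proposition \ref{comp} construction.
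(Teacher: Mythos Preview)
Your argument is correct and uses the same $\Gamma$ and $\phi$ as the paper, but the two proofs diverge at the point of producing a finite model of the hypothetical interpolant $\psi$. The paper argues abstractly: from $\psi \vsim \phi$ it deduces that every model of $\psi$ must be infinite, observes that $\psi$ has \emph{some} model (since $\Gamma$ is consistent and $\Gamma \vsim \psi$), and then invokes an external result of Ash \cite{Ash} to the effect that any satisfiable sentence in the pure equality language has a finite model, yielding the contradiction. You instead apply the First Reduction Case to collapse $\Gamma \vsim \psi$ to $\Gamma \vdash \psi$, and then classical compactness to obtain $\Gamma_0 \vdash \psi$ for a finite $\Gamma_0 \subseteq \Gamma$, which directly hands you finite models of $\psi$ of every sufficiently large cardinality. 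Your route is more self-contained (no appeal to \cite{Ash}) and makes explicit the link back to the compactness failure of Proposition~\ref{comp}; the paper's route trades this for a one-line appeal to a known structural fact about the pure equality fragment.
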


\begin{proof}
Consider the theory $\Gamma$ and sentence $\phi$ in the proof of Proposition \ref{comp}. We know that $\Gamma \vsim \phi$ and $\Gamma$ is consistent. However, if there is an interpolant $\psi$ which uses equality in the language $L_\Gamma \cap L_\phi$ (which is just the pure language of equality) such that  $\Gamma \vsim \psi$ and  $\psi \vsim \phi$, this means that $\psi$ only has infinite models.  To see this, suppose that $\psi \vsim \phi$ and $\mathfrak{A} \models \psi$ where  $\mathfrak{A}$ is a model for the pure language of equality. It follows that there is a model  $\mathfrak{A}'$ for the same language such that $\mathfrak{A}\preceq_{L_\psi}
 \mathfrak{A}'$  and that, furthermore,  $\mathfrak{A}'$ can be expanded to a model $\mathfrak{A}''$ for the language $L_{\psi,\phi}$ for which $\mathfrak{A}''\models \phi$, and hence $\mathfrak{A}'$ has to be infinite (and then $\mathfrak{A}$ is infinite too as the pure-equality sentences `there are at least $n$ elements' are all true in $\mathfrak{A}'$ and thus in  $\mathfrak{A}$ since $\mathfrak{A}\preceq_{L_\psi}
 \mathfrak{A}'$).  Since $\Gamma $ is consistent and $\Gamma \vsim \psi$, then  $\psi$  clearly has a model, and hence we obtain a contradiction with \cite[Thm. 1]{Ash} which implies that $\psi$ has a finite model since it is a sentence in the pure language of equality.
\end{proof}

\begin{Rmk}\label{inter3}\emph{This counterexample to interpolation also works for the variant of Definition \ref{friend} where $\mathfrak{A} = \mathfrak{A}'$. We note that if the language lacks equality but has a zero-ary $\bot$ connective, then the counter-example in Proposition \ref{inter} is no longer available and in fact interpolation then holds in a rather trivial way, as can be shown by essentially the same kind of argument as in Remark \ref{comp2} (putting $\bot$ in place of $\Gamma_0$ in the case that $\Gamma$ is inconsistent, and putting  $\top$ in place of the empty set when $\phi$ is consistent).
}
\end{Rmk}

We have seen  the failure of  interpolation, so what about  the Beth definability theorem? As it is well-known,  interpolation gives Beth definability in first-order logic (the other direction is false \cite{Jensen} in general when considering abstract logics in the sense of Lindstr\"om \cite{lin}). Thus even though we have disproved the former it is still reasonable to wonder about the latter$-$and it comes out true.

\begin{Pro}[Beth definability property for $\vsim $] \label{beth} Let $P$ be an $n$-ary predicate symbol not in language $L$, $\Gamma$ a set of sentences where $L_\Gamma = L \cup \{P\}$  and $\mathfrak{A}$ a  model for $L$. Then the following are equivalent:
\begin{itemize}
\item[$(i)$] If $(\mathfrak{A}, B) \models \Gamma$ and $(\mathfrak{A}, C) \models \Gamma$ (where $(\mathfrak{A}, B), (\mathfrak{A}, C)$ are the expansions of $\mathfrak{A}$ obtained by interpreting the predicate $P$ as the $n$-ary relations $B$ and $C$, respectively),  then $B=C$.
\item[$(ii)$] There is a formula $\psi (\overline{x})$ of $L$ such that
$$ \Gamma \vsim \forall \overline{x} (\psi (\overline{x}) \leftrightarrow P (\overline{x})).$$
\item[$(iii)$]There is a formula $\psi (\overline{x})$ of $L$ such that
$$ \Gamma \vdash \forall \overline{x} (\psi (\overline{x}) \leftrightarrow P (\overline{x})).$$
\end{itemize}
\end{Pro}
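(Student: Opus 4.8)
The plan is to prove the cycle $(iii) \Rightarrow (ii) \Rightarrow (i) \Rightarrow (iii)$. The step $(iii) \Rightarrow (ii)$ is immediate from Supraclassicality (Proposition \ref{supra}): an explicit first-order definition $\Gamma \vdash \forall \overline{x}(\psi(\overline{x}) \leftrightarrow P(\overline{x}))$ yields the friendliness version for free. For $(ii) \Rightarrow (i)$, suppose $(ii)$ holds and that $(\mathfrak{A}, B) \models \Gamma$ and $(\mathfrak{A}, C) \models \Gamma$. I would apply the definition of $\vsim$ to the model $(\mathfrak{A}, B)$: there is an elementary extension $(\mathfrak{A}, B) \prec_{L_\Gamma} \mathfrak{A}'$ that expands to a model of $\forall \overline{x}(\psi(\overline{x}) \leftrightarrow P(\overline{x}))$; but this sentence is already in the language $L_\Gamma$ (since $\psi$ is over $L$ and $P \in L_\Gamma$), so no expansion is needed and $(\mathfrak{A}, B)$ itself satisfies it. Hence $B = \psi^{\mathfrak{A}}$, and by the identical argument $C = \psi^{\mathfrak{A}}$, so $B = C$. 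Observe that this argument does not even use that $\mathfrak{A}$ ranges over all models with a fixed reduct; it shows that $(ii)$ forces $P$ to be defined by $\psi$ in every model of $\Gamma$, which is stronger than $(i)$ as stated but certainly implies it.

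The real work is $(i) \Rightarrow (iii)$, which is the classical Beth definability theorem. Here I would invoke the standard model-theoretic proof rather than routing through interpolation (which we have just shown fails for $\vsim$, though of course ordinary Craig interpolation for $\vdash$ is still available and would also do the job). The cleanest route: introduce a disjoint copy $P'$ of $P$ and the theory $\Gamma \cup \Gamma'$, where $\Gamma'$ is $\Gamma$ with $P$ replaced by $P'$, in the language $L \cup \{P, P'\}$. The hypothesis $(i)$ — restated: in any model of $\Gamma \cup \Gamma'$, since the $L$-reduct is common, we must have $P = P'$ — gives $\Gamma \cup \Gamma' \vdash \forall \overline{x}(P(\overline{x}) \leftrightarrow P'(\overline{x}))$. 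By compactness for $\vdash$ there are finite conjunctions $\gamma$ of sentences from $\Gamma$ and $\gamma'$ the corresponding primed conjunction with $\gamma \wedge \gamma' \vdash \forall \overline{x}(P(\overline{x}) \leftrightarrow P'(\overline{x}))$. Now apply Craig interpolation to $\gamma \wedge P(\overline{c}) \vdash \gamma' \rightarrow P'(\overline{c})$ (with $\overline{c}$ fresh constants): the interpolant $\psi(\overline{c})$ lies in the common language $L \cup \{\overline{c}\}$, hence is of the form $\psi(\overline{x})$ over $L$, and one checks $\gamma \vdash \forall \overline{x}(P(\overline{x}) \leftrightarrow \psi(\overline{x}))$, so a fortiori $\Gamma \vdash \forall \overline{x}(\psi(\overline{x}) \leftrightarrow P(\overline{x}))$, which is $(iii)$.

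I expect the main conceptual obstacle to be purely one of bookkeeping: matching the somewhat nonstandard way hypothesis $(i)$ is phrased — fixing a single model $\mathfrak{A}$ of the reduct language and quantifying over expansions — with the global statement actually needed for the classical argument. In fact $(i)$ as written is a statement about one fixed $\mathfrak{A}$, so strictly one should either read it as implicitly universally quantified over all $L$-reducts $\mathfrak{A}$ of models of $\Gamma$ (which is the intended reading and the one the classical theorem consumes), or note that the equivalence is being asserted uniformly. Once that is clarified, the only other point requiring a word of care is that $\psi$ may contain the free variables $\overline{x}$, so one works with sentences $\psi(\overline{c})$ over adjoined constants and then reads the constants back as variables, exactly as in \cite[Lemma 2.3.2]{Hodges}. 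No step involves a genuinely delicate construction, so the proof should be short.
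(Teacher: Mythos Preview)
Your proposal is correct and follows essentially the same route as the paper: the paper also reduces $(ii)\Rightarrow(i)$ to the observation that the defining sentence already lies in $L_\Gamma$ (so the expansion is trivial and the elementary extension transfers $\phi$ back to $(\mathfrak{A},B)$), and it handles the link with $(iii)$ by simply citing the classical Beth theorem rather than reproving it via Craig interpolation as you do. Your observation about the implicit universal quantification over $\mathfrak{A}$ in $(i)$ is well taken; the paper's proof of $(i)\Rightarrow(ii)$ silently assumes exactly that reading.
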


\begin{proof}
$(i) \implies (ii)$: Suppose (i), Beth's theorem for classical logic gives us (ii) with $\vdash$ in place of $\vsim$, and we are done by Proposition \ref{supra} .

$(ii) \implies (i)$: Assume that 
there is a formula $\psi (\overline{x})$ of $L$ such that
$$ \Gamma \vsim  \phi$$ 
where $\phi$ is $\forall \overline{x} (\psi (\overline{x}) \leftrightarrow P (\overline{x}))$ and where $L_\Gamma = L \cup \{P\}$ with $P \notin L$.
Suppose that $(\mathfrak{A}, B) \models \Gamma$ and $(\mathfrak{A}, C) \models \Gamma$. We need to show that $B=C$. Since $ \Gamma \vsim  \phi$ 
 there is a model  $(\mathfrak{A}, B)'$ for the language $L_\Gamma$  such that $(\mathfrak{A}, B)\preceq_{L_\Gamma}
 (\mathfrak{A}, B)'$ which can be expanded to a model $(\mathfrak{A}, B)''$ for the language $L_{\Gamma,\phi}$  such that $(\mathfrak{A}, B)''\models \phi$. But $L_{\Gamma, \phi} = L_\Gamma=L\cup \{P\}$ so  $(\mathfrak{A}, B)' = (\mathfrak{A}, B)''$, so $(\mathfrak{A}, B)'\models \phi$. Since $(\mathfrak{A}, B)\preceq_{L_\Gamma}
 (\mathfrak{A}, B)'$ this gives us $(\mathfrak{A}, B)\models \phi$. By similar reasoning, $(\mathfrak{A}, C)\models \phi$. Since $\phi$ is $\forall \overline{x} (\psi (\overline{x}) \leftrightarrow P (\overline{x}))$, we thus have $B=C$.

 Finally, $(iii)$ is equivalent to $(i)$ by the Beth definability theorem.

\end{proof}
{
\begin{Rmk} Observe that the equivalence between $(ii)$ and $(iii)$ is also just an immediate consequence of Proposition \ref{red1}.
\end{Rmk}}

\begin{Rmk}\label{count}
\emph{The argument for the Beth definability property works as well for the variant of Definition \ref{friend} where $\mathfrak{A} = \mathfrak{A}'$.
On the other hand, for both definitions, if in the version of Beth definability that we have given for $\vsim$ we weaken the requirement to  $L_\Gamma \subseteq  L \cup \{P\}$,  the implication $(ii) \implies (i)$ fails. This is because if $P$ does not appear in $\Gamma$ then surely, for any $\psi (\overline{x}) $ of  $ L$, any model of $\Gamma$ can be expanded to one where $\forall \overline{x}  (\psi (\overline{x}) \leftrightarrow P (\overline{x}))$ is true. If $ \Gamma $ has a model, $\mathfrak{A}$, one can expand it to one for the language containing $P$ by interpreting $P$ as the elements of $\mathfrak{A}$ that satisfy $\psi$, or, alternatively,  the elements of $\mathfrak{A}$ that satisfy $\neg \psi$. This counter-example to $(ii) \implies (i)$ does not affect the classical $(iii) \implies (i)$ because for any $\Gamma$ with $\Gamma \cup \{\psi\}$ included in $L$ and $P$ not in $L$ if
   $\Gamma \vdash \forall \overline{x} (\psi (\overline{x}) \leftrightarrow P (\overline{x}))$, any model of $\Gamma$ for the language including $P$ will have to give $P$ the interpretation of the set of all the elements of the domain that satisfy $\psi$. {In this case given that $\Gamma$  and $\psi$ are both in $L$ while $P$ is not, if  $\Gamma \vdash \forall \overline{x} (\psi (\overline{x}) \leftrightarrow P (\overline{x}))$ then  $P$ must be interpreted as the empty set.}}
\end{Rmk}

\section{Non-axiomatizability}
Even though Proposition \ref{comp} implies that there can be no \emph{strongly complete} (and sound) finitary axiomatization of $\vsim$ (as the existence of such an axiomatization would imply compactness for the consequence relation), it still makes sense  to ask whether a \emph{weakly complete} finitary axiomatization exists. It turns out that for predicate calculus this is not possible either. {In what follows, as in the usual predicate calculus, we always assume that we have as rich a vocabulary as necessary, that is, we have a countable list of predicate and function symbols of whatever arity we might wish.}

\begin{Pro}\label{nonax} The set of formulas $\phi$ in predicate calculus without equality for which  $\vsim \phi$, is not recursively enumerable.

\end{Pro}

\begin{proof} 
First recall that the set of validities of predicate calculus without equality is not recursive by the Church-Turing theorem \cite[Thm. 54 of \S 76]{kle2}.  Thus the set of  formulas without equality that are satisfiable  cannot be recursively enumerable since the set of validities is recursively enumerable. {We then use Proposition \ref{red3} to get the result.} \end{proof}

{
\begin{Cor} \label{nonax2} The set of formulas $\phi$ in predicate calculus with equality  for which  $\vsim \phi$, is not recursively enumerable.
\end{Cor}
\begin{proof} Clearly, any effective enumeration of the formulae $\phi$ with equality such that $\vsim \phi$, can be transformed into an effective enumeration of those among them that lack equality, contrary to Proposition \ref{nonax}.
\end{proof}}

{\begin{Rmk}\label{nonax3}\emph{
It is natural to ask what happens if we have an empty vocabulary and the pure language of equality. In this case, we have recursive enumerability thanks to Proposition \ref{red4}.}
\end{Rmk}}

\section*{Open questions}

There are many directions for further research. Evidently, a first concern is to complete Table 1 in the Appendix. One could also look for other relations between models that make sense as specifications of {\bf R} there; back-and-forth equivalence is mentioned in the Appendix as a possibility. One might consider in a more systematic way the context of equality-free logic, where the relation of weak isomorphism used in \cite{Badia} could be a tool of interest. Finally, one might go beyond the context of classical logic to explore intuitionistic or modal contexts, in which further specifications of {\bf R} are suggested by bisimulation and similar notions. 
\section*{Acknowledgements}
 {We are deeply indebted to an anonymous reviewer who provided very detailed and helpful comments identifying a number of problems in an earlier version of the article.} Badia is supported by  the Australian Research Council grant DE220100544.

\section*{Appendix}\label{gen}

A fairly general format for variants of Definition \ref{friend} in first-order contexts (with equality)
is as follows:
\begin{itemize}
\item[] Say that a set $\Gamma$ of sentences is \emph{friendly$^\text{{\bf R}}_\text{{\bf S}}$}  (in symbols $\vsim^\text{{\bf R}}_\text{{\bf S}}$) to a formula  $\phi$ iff for every model $\mathfrak{A}$ for the language
$L_\Gamma$, if $\mathfrak{A}$ satisfies  $\Gamma$ then there are models $\mathfrak{A}'$, $\mathfrak{A}''$ for languages $L_\Gamma$, $L_{\Gamma, \phi}$, respectively with $(\mathfrak{A}, \mathfrak{A}') \in \text{{\bf R}}$, $(\mathfrak{A}', \mathfrak{A}'') \in \text{{\bf S}}$
 and $\mathfrak{A}''$ satisfies both $\Gamma$ and $\phi$.
 \end{itemize}
Various definitions emerge from different ways of specifying the relations {\bf R}, {\bf S}. There are at least four
interesting ways of filling in for {\bf R}, and another three for {\bf S}. Our guiding idea is that  {\bf R} is some kind of relation of being `... is essentially the same as ...', and {\bf S} one of  `... can be enlarged to ...'. We choose {\bf R} to cover relations at least as strong as elementary equivalence bearing in mind the remark of Angus Macintyre that in the model theory of  first-order logic `elementary equivalence is the most fundamental concept [...] It is the analogue of isomorphism in algebra' \cite[p. 140]{Mc}. One example which we have not mentioned so far is back-and-forth equivalence in terms of Ehrenfeucht-Fra\"iss\'e games \cite[Corollary 3.3.3]{Hodges} but for simplicity we will leave it out of our analysis, which is by no means complete.
Here are some options for {\bf R}: $(\mathfrak{A}, \mathfrak{A}') \in \text{{\bf R}}$ iff

\begin{itemize}
\item[{\bf R}$_1$:]  $\mathfrak{A} = \mathfrak{A}'$

\item[{\bf R}$_2$:] $\mathfrak{A} $ is isomorphic to $\mathfrak{A}'$

\item[{\bf R}$_3$:]   $\mathfrak{A} $ can be elementarily embedded into $\mathfrak{A}'$\item[{\bf R}$_4$:]  iff $\mathfrak{A} $ is elementarily equivalent to $\mathfrak{A}'$

\end{itemize}
Clearly each option $\text{{\bf R}}_i$ implies $\text{{\bf R}}_{i+1}$. Our Definition \ref{friend} uses $\text{{\bf R}}_3$.

As  regards ways of filling {\bf S}, the situation is more subtle. One could think of {\bf S} as a being described  by a triple $\langle  D_i, C, P_j \rangle$ where $i,j \in \{1,2\}$ where:
\begin{itemize}
\item[] $D_1$ says that $A' = A''$,
\item[] $D_2$ says that $A' \subseteq A''$,
\item[] $C$ says that for every constant symbol occurring in $\Gamma$, its value in $\mathfrak{A}''$ is the same as its value in $\mathfrak{A}'$,
\item[] $P_1$ says that for every predicate symbol occurring in $\Gamma$, its value in $\mathfrak{A}''$  is the same as its value in $\mathfrak{A}'$,
\item[] $P_2$ says that for every predicate symbol occurring in $\Gamma$, its value in $\mathfrak{A}''$ intersected with the appropriate
power of $A'$,  is the same as its value in $\mathfrak{A}'$.
\end{itemize}
Clearly, $D_1$ implies $D_2$ and when $D_1$ holds then $P_1$ is equivalent to $P_2$. So we get three ways of filling in
for {\bf S}: $\langle  D_1, C, P_1 \rangle$  is equivalent to $\langle  D_1, C, P_2 \rangle$  which in turn implies  $\langle  D_2, C, P_1 \rangle$ which implies  $\langle  D_2, C, P_2 \rangle$. Hence, we are left with three possibilities: 
\begin{itemize}
\item[] 
{\bf S}$_1$ given by $\langle  D_1, C, P_1 \rangle$,
\item[]  {\bf S}$_2$ given by $\langle  D_2, C, P_1 \rangle$, and
 \item[]  {\bf S}$_3$ given by $\langle  D_2, C, P_2 \rangle$. 
  \end{itemize}Definition \ref{friend} uses specification {\bf S}$_1$, the strongest in this list, so the relation defined is $\vsim^{\text{{\bf R}}_3}_{\text{{\bf S}}_1}$ while a variant of it that is mentioned from time to time in the main text is $\vsim^{\text{{\bf R}}_1}_{\text{{\bf S}}_1}$. 

\begin{Rmk}\label{Srel}\emph{
Before we continue, let us reflect for a minute on all the three options {\bf S}$_i$. Clearly, {\bf S}$_1$ tells us that $\mathfrak{A}' = (\mathfrak{A}'' \upharpoonright L_\Gamma)$. In contrast,
 {\bf S}$_2$ tells us that $\mathfrak{A}' \subseteq (\mathfrak{A}'' \upharpoonright L_\Gamma)$, that is, $\mathfrak{A}'$ is a submodel of $(\mathfrak{A}'' \upharpoonright L_\Gamma)$ and, furthermore, it is a \emph{pure submodel}\cite[p. 528]{Hodges} in the equality-free language $L_\Gamma^-$ in the sense that any \emph{primitive positive existential} formula of $L_\Gamma^-$ (i.e. formula of the form $\exists \overline{y} \phi(\overline{y})$ where $\phi(\overline{y})$ is a conjunction of atomic formulas, excluding identities) that is satisfied in $(\mathfrak{A}'' \upharpoonright L_\Gamma)$ by a sequence of elements from $\mathfrak{A}'$  must be satisfied in $\mathfrak{A}'$ (equivalently, all negations of existential primitive positive formulas satisfied in  $\mathfrak{A}'$  are satisfied in $(\mathfrak{A}'' \upharpoonright L_\Gamma)$). On the other hand, {\bf S}$_3$ only tells us that $\mathfrak{A}' \subseteq (\mathfrak{A}'' \upharpoonright L_\Gamma)$.}\end{Rmk}

\begin{Pro}\label{genR} Let {\bf R}$_i$  ($i\in \{1,2,3, 4\}$) and {\bf S}$_j$ ($j\in \{1,2,3\}$) be as above. Then,

\begin{itemize}
\item[$(i)$]  $\vsim^{\text{{\bf R}}_1}_{\text{{\bf S}}_j} \ = \ \vsim^{\text{{\bf R}}_2}_{\text{{\bf S}}_j} \ \subseteq \ \vsim^{\text{{\bf R}}_3}_{\text{{\bf S}}_j} \ \subseteq \ \vsim^{\text{{\bf R}}_4}_{\text{{\bf S}}_j} $ for $j \in \{1,2,3\}$, $\vsim^{\text{{\bf R}}_3}_{\text{{\bf S}}_1} \ \supseteq \ \vsim^{\text{{\bf R}}_4}_{\text{{\bf S}}_1} $, and
\item[$(ii)$]  $\vsim^{\text{{\bf R}}_i}_{\text{{\bf S}}_1} \ \subseteq \ \vsim^{\text{{\bf R}}_i}_{\text{{\bf S}}_2} \ \subseteq \ \vsim^{\text{{\bf R}}_i}_{\text{{\bf S}}_3}$ for $i \in \{1,2,3,4\}$.
\end{itemize}
\end{Pro}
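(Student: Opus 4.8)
The statement is essentially a bookkeeping exercise: each inclusion follows from observing that the defining condition for the weaker relation can be met by the witnesses supplied for the stronger one, together with the implications among the $\text{{\bf R}}_i$ and $\text{{\bf S}}_j$ already recorded just before the proposition. So the plan is to treat (i) and (ii) as two families of inclusions and dispatch each by a ``same witnesses work'' argument, flagging the one inclusion that genuinely requires an idea.

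For (ii), fix $i$ and suppose $\Gamma \vsim^{\text{{\bf R}}_i}_{\text{{\bf S}}_1} \phi$. Given $\mathfrak{A} \models \Gamma$, take the witnesses $\mathfrak{A}', \mathfrak{A}''$ with $(\mathfrak{A},\mathfrak{A}')\in \text{{\bf R}}_i$, $(\mathfrak{A}',\mathfrak{A}'')\in \text{{\bf S}}_1$, and $\mathfrak{A}''\models \Gamma\cup\{\phi\}$. Since $\text{{\bf S}}_1 = \langle D_1,C,P_1\rangle$ implies $\text{{\bf S}}_2 = \langle D_2,C,P_1\rangle$ implies $\text{{\bf S}}_3 = \langle D_2,C,P_2\rangle$ (as recorded in the discussion preceding the proposition, using that $D_1$ implies $D_2$ and, under $D_1$, $P_1$ is equivalent to $P_2$), the \emph{same} $\mathfrak{A}', \mathfrak{A}''$ witness $\Gamma \vsim^{\text{{\bf R}}_i}_{\text{{\bf S}}_2} \phi$ and $\Gamma \vsim^{\text{{\bf R}}_i}_{\text{{\bf S}}_3} \phi$. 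This handles (ii) uniformly for all $i \in \{1,2,3,4\}$.

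For (i), fix $i \in \{1,2,3\}$ (the value of the {\bf S}-subscript). The equality $\vsim^{\text{{\bf R}}_1}_{\text{{\bf S}}_i} = \vsim^{\text{{\bf R}}_2}_{\text{{\bf S}}_i}$ requires both directions: $\text{{\bf R}}_1$ implies $\text{{\bf R}}_2$ gives $\subseteq$ by reusing witnesses; for $\supseteq$, if $(\mathfrak{A},\mathfrak{A}')\in\text{{\bf R}}_2$ via an isomorphism $f$, then one transports $\mathfrak{A}''$ back along $f$ to obtain $\mathfrak{A}'''$ with $(\mathfrak{A},\mathfrak{A}''')$ literally equal (i.e.\ $\text{{\bf R}}_1$) and $(\mathfrak{A},\mathfrak{A}''')\in\text{{\bf S}}_i$ — here one checks that $\text{{\bf S}}_i$, being formulated in terms of domains, constant values, and predicate values, is preserved under isomorphic relabelling of $\mathfrak{A}'$. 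The two inclusions $\vsim^{\text{{\bf R}}_2}_{\text{{\bf S}}_i} \subseteq \vsim^{\text{{\bf R}}_3}_{\text{{\bf S}}_i} \subseteq \vsim^{\text{{\bf R}}_4}_{\text{{\bf S}}_i}$ are again immediate from $\text{{\bf R}}_2 \Rightarrow \text{{\bf R}}_3 \Rightarrow \text{{\bf R}}_4$ and reuse of witnesses.

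The one inclusion that is not pure bookkeeping, and which I expect to be the main obstacle, is $\vsim^{\text{{\bf R}}_3}_{\text{{\bf S}}_1} \supseteq \vsim^{\text{{\bf R}}_4}_{\text{{\bf S}}_1}$ — that is, dropping from elementary equivalence back down to elementary embeddability does not enlarge the relation when $\text{{\bf S}}_1$ is in force. This is exactly the content foreshadowed in the main text (``in the Appendix we show that in Definition \ref{friend} we could replace the relation of elementary embedding by that of elementary equivalence''). To prove it, suppose $\Gamma \vsim^{\text{{\bf R}}_4}_{\text{{\bf S}}_1} \phi$ and let $\mathfrak{A}\models\Gamma$. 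I would apply the hypothesis to get $\mathfrak{A}'$ elementarily equivalent to $\mathfrak{A}$ (in $L_\Gamma$) with an $L_{\Gamma,\phi}$-expansion $\mathfrak{A}''\models\phi$; the task is to produce instead a single model $\mathfrak{B}$ that is simultaneously an elementary extension of $\mathfrak{A}$ \emph{and} carries an $L_{\Gamma,\phi}$-expansion satisfying $\phi$. The natural route is a Robinson-diagram compactness argument, as in the proof of Proposition \ref{red2}: since $\mathfrak{A} \equiv_{L_\Gamma} \mathfrak{A}'$ and $\mathfrak{A}'$ expands to a model of $\phi$, one shows $\text{eldiag}_{L_\Gamma}(\mathfrak{A}) \cup \{\phi\}$ is consistent by the same argument — if some finite $T\subseteq\text{eldiag}_{L_\Gamma}(\mathfrak{A})$ were inconsistent with $\phi$, then $\phi \vdash \forall\overline{x}\,\neg\bigwedge T^*$ (the diagram constants being new to $L_{\Gamma,\phi}$, via \cite[Lemma 2.3.2]{Hodges}), whence $\mathfrak{A}' \models \forall\overline{x}\,\neg\bigwedge T^*$ and so $\mathfrak{A} \models \forall\overline{x}\,\neg\bigwedge T^*$, contradicting $\mathfrak{A}\models\exists\overline{x}\,\bigwedge T^*$. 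A model $\mathfrak{C}$ of $\text{eldiag}_{L_\Gamma}(\mathfrak{A}) \cup \{\phi\}$ then yields $\mathfrak{B} = \mathfrak{C}\upharpoonright L_\Gamma$, an elementary extension of $\mathfrak{A}$ by \cite[Lemma 2.5.3]{Hodges}, with $\mathfrak{C}\upharpoonright L_{\Gamma,\phi}$ the required expansion; and since this expansion sits over $L_\Gamma$ on the same domain, $(\mathfrak{B}, \mathfrak{C}\upharpoonright L_{\Gamma,\phi}) \in \text{{\bf S}}_1$. The delicate point to get right is that $\text{{\bf S}}_1$ — which pins down $\mathfrak{A}'$ as exactly $\mathfrak{A}''\upharpoonright L_\Gamma$ — is what makes the two-step passage collapse into a single elementary extension; without it (i.e.\ for $\text{{\bf S}}_2, \text{{\bf S}}_3$) this inclusion is not claimed, consistent with the fact that the domain-expansion freedom there breaks the argument.
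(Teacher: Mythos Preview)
Your proposal is correct and takes essentially the same approach as the paper: both dispatch the chain of inclusions directly from $\text{{\bf R}}_i \subseteq \text{{\bf R}}_{i+1}$ and $\text{{\bf S}}_j \subseteq \text{{\bf S}}_{j+1}$, handle the equality $\vsim^{\text{{\bf R}}_1}_{\text{{\bf S}}_i} = \vsim^{\text{{\bf R}}_2}_{\text{{\bf S}}_i}$ by transporting the witness along the isomorphism, and prove the one substantive inclusion $\vsim^{\text{{\bf R}}_4}_{\text{{\bf S}}_1} \subseteq \vsim^{\text{{\bf R}}_3}_{\text{{\bf S}}_1}$ via the identical Robinson-diagram compactness argument showing $\text{eldiag}_{L_\Gamma}(\mathfrak{A}) \cup \{\phi\}$ is consistent. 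Your derivation of the contradiction is in fact slightly cleaner, invoking $\mathfrak{A}\models\exists\overline{x}\,\bigwedge T^*$ directly from $T\subseteq\text{eldiag}(\mathfrak{A})$ rather than routing through $\mathfrak{A}'$.
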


\begin{proof}
(i): Clearly, all left-in-right inclusions hold since each $\text{{\bf R}}_i \subseteq \text{{\bf R}}_{i+1}$. The converse  inclusion $  \vsim^{\text{{\bf R}}_2}_{\text{{\bf S}}_j} \ \subseteq \ \vsim^{\text{{\bf R}}_1}_{\text{{\bf S}}_j}$ is easy to see  because we can isomorphically copy  $\mathfrak{A}''$ as an expansion of $\mathfrak{A}$ itself.
Now we wish to show that $\vsim^{\text{{\bf R}}_4}_{\text{{\bf S}}_1} \ \subseteq \ \vsim^{\text{{\bf R}}_3}_{\text{{\bf S}}_1} $. Suppose  that $\Gamma \vsim^{\text{{\bf R}}_4}_{\text{{\bf S}}_1} \phi$; we want to show that $\Gamma \vsim^{\text{{\bf R}}_3}_{\text{{\bf S}}_1} \phi$ (which is just what we have called  $\Gamma \vsim \phi$ until now).  
 Let then $\mathfrak{A} \models \Gamma$ be an arbitrary model for $L_\Gamma$, so by the assumption $\Gamma \vsim^{\text{{\bf R}}_4}_{\text{{\bf S}}_1} \phi$,
there is a model  $\mathfrak{A}'\equiv_{L_\Gamma}
 \mathfrak{A}$ which can be expanded to a model $\mathfrak{A}''$ for the language $L_{\Gamma, \phi}$ with $\mathfrak{A}''\models \phi$. In order to establish that $\Gamma \vsim^{\text{{\bf R}}_3}_{\text{{\bf S}}_1} \phi$ it suffices  to  show that $\text{eldiag}(\mathfrak{A}) \cup \{\phi\}$ is consistent. Suppose otherwise, that is,  for some finite $T \subseteq \text{eldiag}(\mathfrak{A})$,  the theory $T \cup \{\phi\}$ is not consistent. Then $\phi \vdash \neg \bigwedge T$, but then since the diagram constants in $T$ are all new to $L_{\Gamma, \phi}$, $\phi \vdash \forall \overline{x}\neg \bigwedge T^*$ by \cite[Lemma 2.3.2]{Hodges} where $T^*$ is the result of replacing in every sentence in $T$ the new constants by new variables. Then $\mathfrak{A}''\models  \forall \overline{x}\neg \bigwedge T^*$ so $\mathfrak{A}'\models  \forall \overline{x}\neg \bigwedge T^*$ and since  $\mathfrak{A}' \equiv_{L_\Gamma} \mathfrak{A}$, we have that $\mathfrak{A}\models  \forall \overline{x}\neg \bigwedge T^*$. {On the
other hand, since  $(\mathfrak{A}, \overline{a}) \models T \subseteq \text{eldiag}(\mathfrak{A})$  (where $\overline{a}$  is a list of fresh names
for the elements of $\mathfrak{A}$), we have $\mathfrak{A}\models  \exists \overline{x} \bigwedge T^*$, giving us a contradiction}
 
%
 
 (ii): This is immediate by the observations in  Remark \ref{Srel}  on the connections between {\bf S}$_1$, {\bf S}$_2$, and {\bf S}$_3$.\end{proof}

Thus it would be possible to reformulate Definition \ref{friend} equivalently with the attractively symmetric {\bf R}$_4$ in place of the non-symmetric {\bf R}$_3$. The reason why we have not followed that organization of the material is that our proofs using Robinson diagrams (for Propositions \ref{red2} and \ref{consis}) are more easily carried out in terms of {\bf R}$_3$ than {\bf R}$_4$.

{\begin{Rmk}\label{use1}\emph{Supraclassicality (Proposition \ref{supra}) trivially holds for all the inference relations just defined because we can always take $\mathfrak{A}=\mathfrak{A}'=\mathfrak{A}''$. Furthermore, for $\vsim^{\text{{\bf R}}_1}_{\text{{\bf S}}_1} \ = \ \vsim^{\text{{\bf R}}_2}_{\text{{\bf S}}_1}$ we get both reduction cases too. For the First Reduction Case we simply use Proposition \ref{genR} (i) and Proposition \ref{red1} itself. For the Second Reduction Case, we need to show $\Gamma \vsim^{\text{{\bf R}}_1}_{\text{{\bf S}}_1} \phi$ iff $\Gamma \not \vdash \neg \phi$.  If $\Gamma \vsim^{\text{{\bf R}}_1}_{\text{{\bf S}}_1} \phi$, then by Proposition \ref{genR} (i) and  Proposition \ref{red2},  $\Gamma \not \vdash \neg \phi$. The argument for the converse is a simplified version of that in the proof of Proposition \ref{red2} (the model $\mathfrak{B}$ that we are given in that reasoning is already all we need).}
\end{Rmk}
}

{\begin{Rmk}\emph{An inspection of the proof of Proposition \ref{nonax} on non-axiomatizability, reveals that it also holds for $\vsim^{\text{{\bf R}}_3}_{\text{{\bf S}}_2}, \vsim^{\text{{\bf R}}_3}_{\text{{\bf S}}_3}$, $ \vsim^{\text{{\bf R}}_4}_{\text{{\bf S}}_1}$, $ \vsim^{\text{{\bf R}}_4}_{\text{{\bf S}}_2}$, and $ \vsim^{\text{{\bf R}}_4}_{\text{{\bf S}}_3}$. This is because in each of these cases, a formula following from no premises implies satisfiability and  at the point of the proof of Proposition \ref{nonax} where we show that satisfiability of $\phi$ implies $\vsim^{\text{{\bf R}}_3}_{\text{{\bf S}}_1} \phi$ we may also conclude, using, Proposition \ref{genR}, that this is so for $\vsim^{\text{{\bf R}}_3}_{\text{{\bf S}}_2}, \vsim^{\text{{\bf R}}_3}_{\text{{\bf S}}_3}$, $ \vsim^{\text{{\bf R}}_4}_{\text{{\bf S}}_1}$, $ \vsim^{\text{{\bf R}}_4}_{\text{{\bf S}}_2}$, and $ \vsim^{\text{{\bf R}}_4}_{\text{{\bf S}}_3}$.}
\end{Rmk}
}

\begin{Rmk}\label{int}\emph{The 
First Reduction Case fails for $\vsim^{\text{{\bf R}}_i}_{\text{{\bf S}}_3}$. Take $\Gamma$ to be the set containing the two sentences  (the first one in the in the pure language language of equality) that say `there is at most one element in the universe' and `the binary relation $R$ is non-empty'. Let $\phi$ be `the binary relation $R$ is an injective but not surjective total mapping on the domain'. 
Then we have $L_\phi \subseteq L_\Gamma$, $\Gamma \vsim^{\text{{\bf R}}_1}_{\text{{\bf S}}_3} \phi$ (and hence  $\Gamma\vsim^{\text{{\bf R}}_i}_{\text{{\bf S}}_3} \phi$ by (i)  of Proposition \ref{genR}) but  $\Gamma  \nvdash  \phi$. Take any model $\mathfrak{A}\models \Gamma$, so $\mathfrak{A}$ must be isomorphic  to $\langle \{0\}, \{(0,0)\} \rangle$. {But then $\langle \{0\}, \{(0,0)\} \rangle \subseteq \langle \mathbb{Z}, R_{f^{-1}} \rangle \models \phi$ where $f: \mathbb{N} \longrightarrow \mathbb{Z}$ is the usual bijection $f(x) = \frac{x}{2}$ for $x$ even and $f(x)=\frac{-(x+1)}{2}$ otherwise, and $R_{f^{-1}}$ the graph of $f^{-1}$.}  Furthermore, since $\Gamma\vdash \neg \phi$, this shows a failure of the Second Reduction Case. In addition,  this latter observation also shows the failure of the Characterization in terms of Consistency and its Refinements as clearly  $\phi$ is not consistent with $\neg \phi$ which in turn is trivially consistent with $\Gamma$.}
\end{Rmk}

\begin{Rmk} \label{use2}\emph{We can also see that the  First Reduction Case, the Second Reduction Case, the Characterization in terms of Consistency and its Refinements, all  fail for $\vsim^{\text{{\bf R}}_i}_{\text{{\bf S}}_2}$.    This time let $\Gamma$  be the set containing the two sentences that say `there is at most one element in the universe' and `the \emph{complement} of the binary relation $R$ is non-empty'. Let $\phi$ be `the complement of the binary relation $R$ is an injective but not surjective total mapping on the domain'.  Analogously to our reasoning in Remark \ref{int}, $\Gamma \vsim^{\text{{\bf R}}_1}_{\text{{\bf S}}_2} \phi$ but $\Gamma  \nvdash  \phi$. The key observation is that $\text{{\bf S}}_2$, in contrast to $\text{{\bf S}}_3$ does require that the interpretation of the binary relation $R$ stays fixed when we go the extended model, blocking our previous argument, but does not demand that from the complement  of the interpretation of $R$ (the relation defined by the formula $\neg R(x,y)$). Hence we can repeat our reasoning \emph{mutatis mutandis} with $\neg R(x,y)$ this time  in place of $R(x, y)$.
}
\end{Rmk}

{
\begin{Rmk}\label{dif}\emph{Consider $\vsim^{\text{{\bf R}}_1}_{\text{{\bf S}}_3}$.  Observe that if $\psi$ is consistent, we do not necessarily get that $\emptyset\vsim^{\text{{\bf R}}_1}_{\text{{\bf S}}_3} \psi$ in contrast to the propositional case \cite{Makinson}. For example let $\psi$ be  `there are at most five elements in the universe'. For this  sentence in the pure equality language we have that  $\emptyset \not \vsim^{\text{{\bf R}}_1}_{\text{{\bf S}}_3} \psi$ for otherwise every set would be a subset of a set with at most five elements. More importantly, we can obtain a  failure of compactness reasoning similarly. We will do it for $ \vsim^{\text{{\bf R}}_4}_{\text{{\bf S}}_3}$ and $ \vsim^{\text{{\bf R}}_3}_{\text{{\bf S}}_3}$ and observe that $ \vsim^{\text{{\bf R}}_1}_{\text{{\bf S}}_3}$ is just a special case of $ \vsim^{\text{{\bf R}}_4}_{\text{{\bf S}}_3}$.
To see this, first we must recall that it is well-known that the class of 2-colourable graphs is not finitely axiomatizable  in the  language $L_\text{Graph}$ containing one binary relation $E$ only (see, for example, \cite{cai}), however we can clearly write a sentence $\phi$ in an expanded language such that every  2-colourable graph can be expanded to a model of $\phi$ (simply examine the definition of 2-colourability and add unary predicates for the colours). Furthermore, in the language $L_\text{Graph}$,  2-colourable graphs can be axiomatized by the infinite set  $\Gamma$ of universal sentences `$E$ is symmetric and irreflexive', and `there is no cycle of length $n$' for every odd $n\geq 3$. Then  $\Gamma \vsim^{\text{{\bf R}}_4}_{\text{{\bf S}}_3}\phi$, $\Gamma \vsim^{\text{{\bf R}}_3}_{\text{{\bf S}}_3}\phi$ and $\Gamma \vsim^{\text{{\bf R}}_1}_{\text{{\bf S}}_3}\phi$. Let $(\phi)_\forall$ denote the set of all universal logical consequences of  $\phi$ \emph{in the language $L_\Gamma$}. We can observe that $\Gamma \subseteq (\phi)_\forall$.  Thus there is no finite $\Gamma_0 \subseteq \Gamma $ such that   $\Gamma_0 \vdash (\phi)_\forall$ for otherwise $\Gamma_0$ finitely axiomatizes $\Gamma$ and hence the class of 2-colourable graphs, which is impossible.   Let us show that  for no  finite $\Gamma_0 \subseteq \Gamma $ it is the case that $\Gamma_0 \vsim^{\text{{\bf R}}_4}_{\text{{\bf S}}_3} \phi$ (or that $\Gamma_0 \vsim^{\text{{\bf R}}_3}_{\text{{\bf S}}_3} \phi$).
 Suppose that   $\Gamma_0 \vsim^{\text{{\bf R}}_4}_{\text{{\bf S}}_3} \phi$, then every model $\mathfrak{A}$ of $\Gamma_0$ for the language $L_{\Gamma_0}$ is  elementarily equivalent in the language $L_{\Gamma_0}=L_\Gamma$ to a model $\mathfrak{A}'$ that is a submodel of  $(\mathfrak{A}'' \upharpoonright L_{\Gamma_0})$ for a model $\mathfrak{A}''$ of $\phi$ for the language $L_{\Gamma_0, \phi}$. Obviously,  $\mathfrak{A}'' \models (\phi)_\forall$ and hence $(\mathfrak{A}'' \upharpoonright L_{\Gamma_0})\models (\phi)_\forall$ so $\mathfrak{A}' \models (\phi)_\forall$  since the sentences  in $(\phi)_\forall$ are all universal\cite[Corollary 2.4.2]{Hodges}. Thus by elementary equivalence, $\mathfrak{A} \models (\phi)_\forall$. Consequently, $\Gamma_0 \vdash (\phi)_\forall$, which is a contradiction. For the case of $\vsim^{\text{{\bf R}}_1}_{\text{{\bf S}}_3}$ simply observe that using Proposition \ref{genR} (i), we must have that   no  finite $\Gamma_0 \subseteq \Gamma $ it is the case that $\Gamma_0 \vsim^{\text{{\bf R}}_1}_{\text{{\bf S}}_3} \phi$.}
\end{Rmk}}

\begin{Rmk}\label{use4}\emph{Using Proposition \ref{genR},  the same argument as in Remark \ref{dif} works to refute compactness for $\vsim^{\text{{\bf R}}_i}_{\text{{\bf S}}_2}$ since $\Gamma_0 \not \vsim^{\text{{\bf R}}_4}_{\text{{\bf S}}_3} \phi$ implies $\Gamma_0 \not \vsim^{\text{{\bf R}}_4}_{\text{{\bf S}}_2} \phi$ and clearly $\Gamma \vsim^{\text{{\bf R}}_1}_{\text{{\bf S}}_2} \phi$.
}
\end{Rmk}

Table 1 collects the main facts that we have verified for variants of Definition \ref{friend} in first-order logic with equality {(it omits the data-points for first-order logic without equality that have been mentioned in passing in Remarks  \ref{comp2} and \ref{inter3})}. It remains incomplete, as the question marks flag cases that are still open.  The authors suspect that for those cells the answers are negative. {But, even in the unlikely case there is a row where both answers turn out positive,} the configuration $(\text{{\bf R}}_3, \text{{\bf S}}_1)$ that we have adopted in Definition  \ref{friend}, whose row in the table is highlighted by double lines, would appear to be the version of first-order friendliness that best preserves properties established for the propositional context.

\begin{table}[]
\centering
\bgroup
\def\arraystretch{2}
\begin{tabular}{|l|l|l|l|l|l|l|}
\hline
                                                                      & Supra & Reduction & Consistency & Compactness & Interpolation & Beth \\ \hline

{ $\vsim^{\text{{\bf R}}_1}_{\text{{\bf S}}_1} \ = \ \vsim^{\text{{\bf R}}_2}_{\text{{\bf S}}_1}$} &        $+$  [Rmk. \ref{use1}]      &         $+$ [Rmk. \ref{use1}]      &        $-$          [Rmk. \ref{use3}]              &      $-$   [Rmk. \ref{comp2}]      &      $-$ [Rmk. \ref{inter3}]          &    $+$ [Rmk. \ref{count}]   \\  \hline

{$\vsim^{\text{{\bf R}}_1}_{\text{{\bf S}}_2} \ = \  \vsim^{\text{{\bf R}}_2}_{\text{{\bf S}}_2}$ }&     $+$     [Rmk. \ref{use1}]       &      $-$   [Rmk. \ref{use2}]     &       $-$      [Rmk. \ref{use2}]                     &        $-$    [Rmk. \ref{use4}]           &       ?          &  ?      \\ 
\hline

{ $\vsim^{\text{{\bf R}}_1}_{\text{{\bf S}}_3} \ = \  \vsim^{\text{{\bf R}}_2}_{\text{{\bf S}}_3}$ } &     $+$   [Rmk. \ref{use1}]         &   $-$   [Rmk. \ref{int}]        &      $-$   [Rmk. \ref{int}]         &     $-$  [Rmk. \ref{dif}]               &      ?           &    ?    \\
 \hline \hline

 {$\vsim^{\text{{\bf R}}_3}_{\text{{\bf S}}_1}\ = \   \vsim^{\text{{\bf R}}_4}_{\text{{\bf S}}_1}$} &  $+$   [Prop. \ref{supra}]    &      $+$    [Prop. \ref{red1}\& \ref{red2}]      &       $+$     [Prop. \ref{consis}]                     &         $-$   [Prop. \ref{comp}]   &                 $-$  [Prop. \ref{inter}] &  $+$    [Prop. \ref{beth}]   \\

 \hline \hline

 $\vsim^{\text{{\bf R}}_3}_{\text{{\bf S}}_2}$&   $+$   [Rmk. \ref{use1}]           &      $-$    [Rmk. \ref{use2}]      &                  $-$    [Rmk. \ref{use2}]            &       $-$   [Rmk. \ref{use2}]             &     ?            &    ?    \\
 \hline

 $\vsim^{\text{{\bf R}}_3}_{\text{{\bf S}}_3}$&    $+$    [Rmk. \ref{use1}]          &    $-$   [Rmk. \ref{int}]       &          $-$            [Rmk. \ref{int}]                 &  $-$     [Rmk. \ref{dif}]              &       ?          &   ?     \\
  
 \hline

   $\vsim^{\text{{\bf R}}_4}_{\text{{\bf S}}_2}$ &    $+$    [Rmk. \ref{use1}]          &       $-$    [Rmk. \ref{use2}]     &                    $-$    [Rmk. \ref{use2}]              &      $-$      [Rmk. \ref{use2}]           &          ?       &      ? \\
 \hline

   $\vsim^{\text{{\bf R}}_4}_{\text{{\bf S}}_3}$ &    $+$   [Rmk. \ref{use1}]          &      $-$   [Rmk. \ref{int}]     &         $-$               [Rmk. \ref{int}]               &        $-$   [Rmk. \ref{dif}]          &       ?         &    ?    \\
\hline
\end{tabular}
\egroup
\
\medskip
\medskip
\medskip

\caption{Summary of properties of friends of friendliness}
\end{table}

\bigskip


\begin{thebibliography}{[1]}
\bibitem{ack} W. Ackermann. \emph{Solvable Cases of the Decision Problem}, North-Holland Publishing
Co., Amsterdam (1954).

\bibitem{Ash} C. J. Ash. Sentences with finite models. \emph{Mathematical Logic Quarterly} (1975) 21(1): 401-404.
\bibitem{Badia} G. Badia, X. Caicedo and Carles Noguera. Maximality of logic without identity, \emph{Journal of Symbolic Logic}, to appear (available at \url{https://doi.org/10.1017/jsl.2023.2}).

\bibitem{Barwise} J. Barwise and J. Schilpf. An introduction to recursively saturated and resplendent models. \emph{Journal of Symbolic Logic} (1976) 41 (2): 531-536.

\bibitem{Boole} Boole, George. \emph{The Mathematical Analysis of Logic}. Cambridge:
Macmillan (1847).

\bibitem{Bou} 
K. L. de Bouv\`ere. \emph{A Method in Proofs of Undefinability}, Amsterdam: North-Holland Pub. Co. (1959).

\bibitem{bridge}
J. Bridge. \emph{Beginning Model Theory: the Completeness Theorem and some Consequences}, Oxford University Press (1977).
\bibitem{cai}
X.  Caicedo. Finitely axiomatizable quasivarieties of graphs. \emph{Algebra Universalis} (1995) 34: 314–321.



\bibitem{Craig}
W. Craig.  Linear reasoning. A new form of the Herbrand--Gentzen theorem, \emph{The Journal of Symbolic Logic} 22(3): 250--268 (1957).

\bibitem{Hodges}  W. Hodges. \emph{Model Theory}. Cambridge: Cambridge University Press, 1993.
\bibitem{kle2}
S.C. Kleene. \emph{Introduction to Metamathematics}, Van Nostrand, New York (1952).
\bibitem{Les}
Le\' sniewski, S.  \"Uber definitionen in der sogennanten Theorie der Deduktion. \emph{Comptes Rendus des S\' eances de la Soci\'et\'e des Sciences et des Lettres de
Varsovie}, Classe 3, XXIV: 300-302 (1931).
\bibitem{lin}
P. Lindstr\"om. On extensions of elementary logic, \emph{Theoria} (1969) 35: 1-11.

\bibitem{Kossak} R. Kossak. What is a resplendent structure? \emph{Notices of the AMS} (2011) 58 (6) : 812-814. 

\bibitem{Jensen}  F.V. Jensen. Interpolation and definability in abstract logics. \emph{Synthese} (1974) 27: 251-257. 



\bibitem{Makinson} D. Makinson. Friendliness and sympathy in logic. In: JY Beziau (ed.) \emph{Logica Universalis} (2nd Edition), pp. 195-224, Basel: Birkhauser Verlag,  2007. (Essentially the same material appeared under the title “Friendliness for logicians” in \emph{We Will Show Them! Essays in Honour of Dov Gabbay}, vol 2, ed S. Artemov et. al, College Publications 2005, pp 259-292 and may also be accessed via item 60 of \url{https://sites.google.com/site/davidcmakinson/})
\bibitem{Mc} A. Macintyre. Model Completeness. In: Barwise, J. (ed.), \emph{Handbook of Mathematical Logic}, pp. 139-180, North-Holland (1977)

\bibitem{Rantala}
Rantala, V.  Definitions and definability, pages 135-159 of James
H. Fetzer et al. \emph{Definitions and Definability: Philosophical Perspectives}. Dordrecht :
Kluwer (1991).

\bibitem{Van Benthem}
Van Benthem, J.F.A.K.  Ramsey eliminability, \emph{Studia Logica}
37: 321-336 (1978).

\bibitem{Weber} Weber, A.  Updating propositional formulae, pages 487–500 in L. Ker-
schberg, ed.\emph{ Proceedings of the First Conference on Expert Data Systems}. Benjamin
Cummings (1987). 
\end{thebibliography}
\end{document}